\newcommand{\query}[1]%
{\mbox{}\marginpar{\raggedright\hspace{0pt}{\small\em #1}}}%
\theoremstyle{plain}
\newtheorem{theorem}{Theorem}[section]
\newtheorem{lemma}[theorem]{Lemma}
\newtheorem{proposition}[theorem]{Proposition}
\newtheorem*{theorem*}{Theorem}
\theoremstyle{definition}
\newtheorem{definition}[theorem]{Definition}
\newtheorem{example}[theorem]{Example}
\newtheorem{notation}[theorem]{Notation}
\theoremstyle{remark}
\newtheorem{remarque}[theorem]{Remark}
\def\ZZ{{\mathbb Z}}
\def\QQ{{\mathbb Q}}
\def\CC{{\mathbb C}}
\def\D{{\mathfrak{D}}}
\def\EE{{\mathscr{E}}}
\def\TT{{\mathbb{T}}}
\def\G{{\mathbb{G}}}
\def\rank{{\rm rank}}
\def\dim{{\rm dim}}
\def\deg{{\rm deg}}
\def\PP{{\rm PPDiv_{\QQ}}}
\def\tail{{\Sigma_{\star}}}
\def\codim{{\rm codim}}
\def\face{{\rm face}}
\def\supp{{\rm supp}}
\def\hc{{\mathscr{C}}}
\def\val{{\rm val}}
\def\ord{{\rm ord}}
\def\P{{\mathbb{P}}}
\def\AA{{\mathbb{A}}}
\def\spec{{\rm Spec}}
\def\big{{\TT\times\G_{m}}}
\def\IH{{ IH}}
\def\L{{\mathscr{L}}}
\def\germ{{SH(\EE)}}
\def\germh{{SH(\EE)_{\rm hor}}}
\def\germv{{SH(\EE)_{\rm ver}}}
\def\bfan{{\tail^{\rm big}}}
\def\RR{{\mathscr{R}}}
\def\der{{D^b(X;\QQ)}}
\author{Marta Agustin Vicente}
\address{Basque Center of Applied Mathematics (BCAM)\\ Bilbao, Spain}
\email{martaav22@gmail.com} 
\author{Kevin Langlois}
\address{Heinrich Heine Universit\"at  \\ D\"usseldorf, Germany}
\email{langlois.kevin18@gmail.com} 
\title[ On intersection cohomology with torus actions of complexity one]
{On intersection cohomology with torus \\
actions of complexity one}
\begin{document}
\begin{abstract}
The purpose of this article is to investigate the intersection cohomology for algebraic varieties with torus action. Given
an algebraic torus $\TT$, one of our result determines
the intersection cohomology Betti numbers of any normal projective $\mathbb{T}$-variety admitting an algebraic curve as global quotient. The calculation is expressed in terms of a combinatorial description involving a divisorial fan which is 
the analogous of the defining fan of a toric variety. Our main tool to obtain this computation is a description of the decomposition theorem in this context.
\end{abstract}  
\thanks{\em 2010 Mathematics Subject Classification \rm 32S60, 14L30,  	14M25. 
\\ \em Key Words and Phrases: \rm intersection cohomology, decomposition theorem, algebraic torus actions.}

\maketitle

\setcounter{tocdepth}{1}
\tableofcontents
\section*{Introduction}
In this article, we are interested in the rational intersection cohomology for complex normal algebraic varieties
endowed with an action of an algebraic torus $\TT$. 

Let us recall that a \emph{$\TT$-variety} is a normal variety endowed with an effective $\TT$-action. 
The \emph{complexity} of a $\TT$-variety $X$
is the non-negative number $\dim(X) - \dim(\TT)$; it corresponds to the transcendence degree over $\CC$ of the field extension $\CC(X)^{\TT}$ of invariant rational functions on $X$. By a result of Rosenlicht (see \cite{Ros63}), the complexity of $X$ is the codimension of a general $\TT$-orbit.
Moreover, if the $\TT$-variety $X$ is of complexity one, then the inclusion
$$\CC(X)^{\TT}\subseteq \CC(X)$$
yields a map
$$\gamma: X\dashrightarrow Y$$
to a smooth projective curve $Y$. Here $Y$ is the projective curve obtained from the algebraic function field $\CC(X)^{\TT}$ of one variable. The map $\gamma$ is called the \emph{rational quotient} of $X$. 
We say that the complexity-one $\TT$-variety $X$ is \emph{contraction-free} if the rational quotient map
given by the $\TT$-action on $X$ is a regular morphism. 

The best concrete examples of $\TT$-varieties are \emph{toric varieties}
(corresponding to complexity zero) and affine $\G_m$-surfaces. Both admit a combinatorial description.
The first one can be defined by a fan of strongly convex polyhedral cones in the rational vector space $N_{\QQ}$ associated with the lattice $N$ of one-parameter
subgroups of the torus $\TT$, see for instance \cite{Ful93, CLS11}. The second one can be described by a $\QQ$-divisor or a pair of $\QQ$-divisors 
on a smooth algebraic curve (see \cite{FZ03} for details). Note that contraction-free $\TT$-varieties of complexity one were studied by Mumford in \cite[Chapter IV]{KKMS73}.
These combinatorial descriptions admit a generalization to the setting of $\TT$-varieties (see \cite{AH06, AHS08, Tim08, Lan14}).
The description in \cite{AH06} of an affine $\TT$-variety is in term of a divisor on a normal variety where its coefficients are polyhedra in $N_{\QQ}$. Such a combinatorial object is called a \emph{polyhedral divisor}. More generally, the combinatorial description introduced in \cite{AHS08} for a $\TT$-variety involves a \emph{divisorial
fan} which corresponds to a finite set of polyhedral divisors (with some additional conditions).

One of the results of this article is an explicit description (in terms of divisorial fans) of the intersection cohomology Betti numbers of projective 
contraction-free $\TT$-varieties of complexity one  (see Theorem \ref{main}).
As an intermediate step, we compute the classical Betti numbers for every smooth projective $\TT$-variety of complexity one
(see Propositions \ref{proposition} and \ref{proposition2}). Proposition \ref{proposition2} was also obtained independently in \cite[Section 2]{LLM16}. These results can be related to classical ones in the field of intersection cohomology with a torus action. See \cite{Kir88} for a general description in the projective case using the Bialynicki-Birula decomposition. See also \cite{Sta87, DL91, Fie91, BBKK99} for the toric
projective case which is related to $h$-polynomials and \cite{FK86} for the case of affine $\G_{m}$-surfaces.

To motivate our result, let us make some comments on the description in \cite{Sta87} for the toric projective case. In the sequel,
for every algebraic variety $V$ we will denote by $P_{V}(t)\in\ZZ[t]$ the Poincar\'e polynomial of $V$ which is
the generating function of the intersection cohomology Betti numbers of $V$ (see \ref{betti1}). Let $X$ be a projective 
toric variety (for the torus $\TT$) with defining fan $\Sigma_{X}$. Then, since $X$ is projective, the fan $\Sigma_{X}$
is the normal fan of a rational polytope $Q$. In particular, the set of faces of dimension $i$
of $Q$ is in bijection with the set of cones of codimension $i$. In \cite{Sta87} a polynomial $h(\Lambda;t)$ (called \emph{$h$-polynomial})
depending on a polytope $\Lambda$ is introduced, so that we have the equality $P_{X}(t) = h(Q;t^{2})$ (see \cite[Theorem 3.1]{Sta87}). 
In the smooth case, the polynomial $h(Q;t^{2})$ can be defined by the relation 
$$h(Q; t^{2}) = \sum_{i = 0}^{n}f_{i}(Q)(t^{2}-1)^{i},$$
where $f_{i}(Q)$ is the number of faces of dimension $i$. However, in the non-smooth case, the polynomial above on the right-hand side
has generally negative coefficients and therefore the definition of an $h$-polynomial is different (see \cite[Section 2]{Sta87} 
and also the remainder in \ref{hpol}). Our main result is an adaptation of the description of the $h$-polynomials explained above (see \cite[Theorem 3.1]{Sta87}) to the setting of torus actions of complexity one.

Let us introduce some notation in order to explain our result. Let $\EE$ be a divisorial fan on a smooth projective curve $Y$ corresponding
to a singular projective contraction-free $\TT$-variety $X(\EE)$ of complexity one. We recall the definition of the divisorial fan $\EE$
and the construction of the variety $X(\EE)$ in \ref{divisorial}. Note that the curve $Y$ is the quotient of the $\TT$-action on $X(\EE)$.
Let us denote by $\supp(\EE)$ the support of $\EE$ which corresponds to points $y\in Y$ where the fiber of the quotient map is non-trivial (see \ref{orbit} for a precise definition). 
Then, similarly to the toric case, one can attach rational polytopes $Q(\EE)$ and $Q_{y}(\EE)$ for every $y\in\supp(\EE)$, see \ref{polytopes} for the construction of these objects. 
Our result can be stated as follows.  

\begin{theorem}
\label{main1}  
Let $g$ be the genus of the curve $Y$ and let $r$ be the cardinality of the finite set $\supp(\EE)$.
Then we have the equality
$$P_{X(\EE)}(t) =  ((1-r)t^{2} + 2gt + 1- r)h(Q(\EE); t^{2}) + \sum_{y\in \supp(\EE)}h(Q_{y}(\EE); t^{2}).$$
\end{theorem}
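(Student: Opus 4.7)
The plan is to apply the explicit version of the decomposition theorem (the key technical tool developed earlier in the paper) to the quotient morphism $\pi\colon X(\EE) \to Y$, which is a genuine morphism rather than merely rational thanks to the contraction-free hypothesis. Since $Y$ is a smooth projective curve, the perverse direct summands appearing in the decomposition of $R\pi_{*}IC_{X(\EE)}$ split into two families: shifts of intermediate extensions of local systems on $Y$, and skyscraper sheaves supported at points of the finite set $\supp(\EE)$.

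For the generic/global piece, I would use that over any $y \in Y \setminus \supp(\EE)$ the fiber $\pi^{-1}(y)$ is the projective toric variety with defining polytope $Q(\EE)$, whose intersection cohomology Poincar\'e polynomial equals $h(Q(\EE); t^{2})$ by Stanley's theorem \cite[Theorem 3.1]{Sta87}. Triviality of the monodromy of the resulting local systems, which follows from the contraction-free hypothesis together with the toric structure of the generic fiber, then lets one apply a K\"unneth-type identification, so that this piece of the decomposition contributes $P_{Y}(t)\cdot h(Q(\EE); t^{2}) = (1 + 2gt + t^{2})h(Q(\EE); t^{2})$ to $P_{X(\EE)}(t)$.

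For each special point $y \in \supp(\EE)$, the fiber $\pi^{-1}(y)$ is the projective toric variety associated with $Q_{y}(\EE)$, whose intersection cohomology Poincar\'e polynomial is $h(Q_{y}(\EE); t^{2})$, again by Stanley's theorem. The explicit form of the decomposition theorem pins down the skyscraper contribution at $y$ as the excess of the full intersection cohomology of the special fiber over the portion already captured by the intermediate-extension piece in a punctured neighborhood of $y$; this yields the correction $h(Q_{y}(\EE); t^{2}) - (1+t^{2})h(Q(\EE); t^{2})$. Summing over $\supp(\EE)$ and combining with the generic piece gives
\[
P_{X(\EE)}(t) = (1 + 2gt + t^{2})h(Q(\EE); t^{2}) + \sum_{y \in \supp(\EE)} \bigl(h(Q_{y}(\EE); t^{2}) - (1+t^{2})h(Q(\EE); t^{2})\bigr),
\]
which rearranges to the stated formula. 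The main technical obstacle is exactly this identification of the skyscraper contributions together with the verification of trivial monodromy; it is here that the combinatorial link between $Q_{y}(\EE)$ and $Q(\EE)$ at each special point must be used, through the explicit form of the decomposition theorem in the complexity-one setting.
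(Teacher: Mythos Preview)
Your approach is genuinely different from the paper's, and it contains a real gap.

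\textbf{The gap.} You claim that for $y\in\supp(\EE)$ the fiber $\pi^{-1}(y)$ is the projective toric variety associated with $Q_{y}(\EE)$, and hence has intersection Poincar\'e polynomial $h(Q_{y}(\EE);t^{2})$. This is false already for dimension reasons. By construction (see \ref{polytopes}), the fan $\EE_{y}$ lives in $(N\oplus\ZZ)_{\QQ}$, so $Q_{y}(\EE)$ is a polytope of dimension $d=\dim X(\EE)$, not $d-1$; the toric variety $X(\EE_{y})$ therefore has the same dimension as $X(\EE)$ itself. In fact, Example \ref{example1} shows that $X(\EE_{y})$ is an \'etale-local \emph{model for the total space} $X(\EE)$ near the fiber over $y$, not the fiber. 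The genuine fiber $\pi^{-1}(y)_{\rm red}$ is described in \ref{orbit}: it is in general a reducible union of toric varieties indexed by the vertices of the polyhedral subdivision $\{\D^{i}_{y}\}$, and its intersection cohomology is not $h(Q_{y}(\EE);t^{2})$. Consequently your computation of the skyscraper contribution, which you justify as ``excess of the IH of the special fiber over the generic part'', is not grounded: for a singular source the relevant stalk at $y$ is $H^{\ast}(\pi^{-1}(y),\,IC_{X(\EE)}|_{\pi^{-1}(y)})$, and this restriction of $IC_{X(\EE)}$ is \emph{not} $IC_{\pi^{-1}(y)}$ in general. Your final formula happens to be arithmetically consistent with the theorem because $h(Q_{y}(\EE);t^{2})$ really does encode the local toric model (this is how Lemma \ref{lemma1} is used in the paper), but your argument does not establish the link.

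\textbf{What the paper actually does.} The ``explicit decomposition theorem developed earlier'' is Proposition \ref{decomp}, and it is \emph{not} stated for the quotient map $\pi\colon X(\EE)\to Y$; it is stated for the birational proper map $f\colon X(\EE')\to X(\EE)$ induced by a subdivision $\EE'$ of the divisorial fan. The proof of Theorem \ref{main} proceeds by induction on $d=\dim X(\EE)$: one chooses a subdivision $\EE'$ with $X(\EE')$ $\QQ$-factorial and projective (Lemma \ref{subdivision}), so that $P_{X(\EE')}(t)$ is already known from Proposition \ref{proposition}. Proposition \ref{decomp} then expresses $b_{j}(X(\EE'))$ as $b_{j}(X(\EE))$ plus a correction indexed by the germs $V(\tau)\subsetneq X(\EE)$ with $\tau\in\germ$. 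Each such germ is either a lower-dimensional contraction-free complexity-one $\TT$-variety (horizontal type, handled by the induction hypothesis) or a projective toric variety (vertical type, handled by Theorem \ref{hvector}). The combinatorial identities of Lemmas \ref{formula1} and \ref{formula2} then collapse the resulting expression to the stated formula. At no point is the decomposition theorem applied to $\pi$.
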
 

In particular, one can see that when $X(\EE)$ is a rational variety, the odd rational intersection cohomology groups of $X(\EE)$ vanish as in
the toric projective case.    

To obtain this result, we adapt to our setting a version of the decomposition theorem in \cite{CMM15} given for toric fibrations (see
Proposition \ref{decomp}). This allows us to show the result by induction on the dimension of $X(\EE)$ starting with a projective desingularization of $X(\EE)$ given by a subdivision
of divisorial fans (see \ref{subf} for the definition of a subdivision of divisorial fans). 

Our version of the decomposition theorem is expressed in terms of families of natural numbers called \emph{s-sequences} which are related to
the topology of toric proper maps. The subvarieties appearing in the decomposition theorem are all $\TT$-stable. They are parametrized by a set 
$\germ$ depending combinatorially on the divisorial fan $\EE$. For an algebraic variety $X$ we will denote by $IC_{X}$ its intersection cohomology complex with rational coefficients.
Our result (see Proposition \ref{decomp})  can be enunciated as follows.
\begin{theorem}
\label{main2} 
Let $\EE$ be a divisorial fan on $(Y,N)$ corresponding to a contraction-free $\TT$-variety $X(\EE)$
and let $\EE'$ be a subdivision of the divisorial fan $\EE$. 
Consider the birational proper equivariant morphism $$f:X(\EE')\rightarrow X(\EE)$$ given by the subdivision $\EE'$. Then we have an isomorphism of constructible complexes of sheaves
$$f_{\star}\,IC_{X(\EE')}\simeq \bigoplus_{\tau\in\germ}\bigoplus_{b\in\ZZ}(i_{\tau})_{\star}IC_{V(\tau)}^{\oplus s_{\tau, b}}[-b]$$
in the derived category $D^b(X(\EE); \QQ)$, where $s_{\tau, b}$ is an $s$-sequence of the subdivision $\EE'$ and $i_{\tau}:V(\tau)\rightarrow X(\EE)$ is the inclusion.
\end{theorem} 
Another way to determine $P_{X(\EE)}(t)$ would be to consider the \emph{stratified multiplicative property}\footnote{We are grateful to the referees for suggesting this reference.} for intersection homology introduced in \cite{CMS08}. This method involves a complex algebraic Whitney stratification on the base curve $Y$ so that the quotient map $\pi: X(\EE)\rightarrow Y$
becomes a stratified submersion. Moreover, we emphasize that the $h$-polynomial $h(Q_{y}(\EE); t^{2})$ is not equal to the Poincar\'e polynomial of the special fiber $\pi^{-1}(y)_{\rm red}$, but to the Poincar\'e polynomial of a certain toric variety of dimension 
$\dim\, X(\EE)$. This latter appearing in our induction process allows us to have a simple expression of $P_{X(\EE)}(t)$. Thus using \cite{CMS08} a substantial work is needed in order to recover Theorem \ref{main1}. 

Our results aim to give a better understanding of the intersection cohomology for complexity-one torus actions on  projective normal varieties. Indeed, one can construct every such a $\TT$-variety from a contraction-free one by contracting certain families of orbits (compare with \cite[Theorem 3.1 (ii)]{AH06}). Hence assuming that these contractions are projective, 
a natural problem would be to describe the decomposition theorem for such maps. Thus using Theorem \ref{main1} this would give a description of the intersection cohomology Betti numbers in this setting.

Let us outline the structure of this article. The first section is separated
into two parts. In the first part, we introduce some notation from the combinatorics of $\TT$-varieties. The second part is a reminder on the intersection cohomology theory. In Section 2, we describe the classical Betti numbers of every 
smooth projective $\TT$-variety of complexity one. Finally, Section 3 is devoted to the proof of Theorems \ref{main1} and \ref{main2}.
\
\newline

{\bf Acknowledgments.} We would like to thank the anonymous referees 
for their valuable comments which allowed to improve the presentation.
The authors also thank Javier Fern\'andez de Bobadilla, Javier Elizondo, Daniel Juteau, Simon Riche, Kari Vilonen, and Geordie Williamson for useful discussions.
The authors benefited from the support of the ERC Consolidator Grant NMST. The second author thanks the Max Planck Institut f\"ur Mathematik Bonn for support.
This research is supported by ERCEA Consolidator Grant 615655 -- NMST and also by the Basque Government through the BERC 2014-2017 program and by Spanish Ministry of Economy and Competitiveness MINECO: BCAM Severo Ochoa excellence accreditation SEV-2013-0323.
This work has been partially supported by ICMAT Severo Ochoa project SEV-2015-0554 (MINECO).

\begin{notation} 
By a \emph{variety} we mean an integral separated scheme of finite type over the field of complex numbers $\CC$. 
An \emph{algebraic torus} (of dimension $n$)
is an affine group scheme over $\CC$ isomorphic to $\G_{m}^{n}$, where $\G_{m}$ is the multiplicative group of the field $\CC$.
If $X$ is a variety, then $\CC[X]$ (resp. $\CC(X)$) denotes the algebra of regular global functions (resp. the field of rational functions) on $X$. We will identify $X$ with its set of $\CC$-points $X(\CC)$ and we will also view
$X$ as a complex manifold. 
Hence the set $X$ is endowed with two topologies: the Zariski topology and the Euclidean topology. 
\end{notation}
\section{Preliminaries}
\subsection{Preliminaries on $\TT$-varieties}
In this section, we recall some basic notions on algebraic torus actions of complexity one (see \cite{AH06, AHS08, Tim08, 
Lan14} for details).
\subsection{}
Let $\TT$ be an algebraic torus with lattice of characters $M$ and lattice of one-parameter subgroups $N$.
Then the duality between $M$ and $N$ extends naturally to a duality
$$M_{\QQ}\times N_{\QQ}\rightarrow \QQ, \,\, (m,v)\mapsto \langle m, v\rangle$$
between the $\QQ$-vector spaces $M_{\QQ} = \QQ\otimes_{\ZZ}M$ and $N_{\QQ} = \QQ\otimes_{\ZZ}N$.
\subsection{}
According to the Sumihiro theorem (see \cite[Section 3, Corollary 2]{Sum74}), every $\TT$-variety is covered by affine $\TT$-stable Zariski open subsets.
Thus we recall first how to describe an affine $\TT$-variety by combinatorial objects in the setting of the complexity one.
\subsection{}
A polyhedral cone $\sigma\subseteq N_{\QQ}$ is said to be \emph{strongly convex} if it contains no lines; this condition is
equivalent to the \emph{dual cone}
$$\sigma^{\vee} = \{m\in M_{\QQ}\,|\, \forall v\in\sigma,\, \langle m, v\rangle \geq 0\}$$
being full dimensional.
Let us fix a strongly convex polyhedral cone $\sigma\subseteq N_{\QQ}$. A \emph{$\sigma$-polyhedron} of $N_{\QQ}$
is a Minkowski sum $Q + \sigma$, where $Q\subseteq N_{\QQ}$ is a polytope (i.e., $Q$ is the convex hull of a non-empty finite subset of $N_{\QQ}$).
Let $Y$ be a smooth curve. A \emph{$\sigma$-polyhedral divisor} $\D$ on the curve $Y$ is a formal sum
$$\D = \sum_{y\in Y}\D_{y}\cdot [y],$$
where every $\D_{y}$ is a $\sigma$-polyhedron and $\D_{y} = \sigma$ for all but finitely many $y\in Y$.
For every $m\in\sigma^{\vee}$ we define a $\QQ$-divisor on $Y$ by letting
$$\D(m) = \sum_{y\in Y}\min_{v\in\D_{y}}\langle m, v\rangle\cdot [y].$$
The curve $Y$ (respectively the cone $\sigma$) is usually called the \emph{locus} (respectively the \emph{tail}) of $\D$.
The $\sigma$-polyhedral divisor $\D$ is called \emph{proper} if $Y$ is affine; or $Y$ is projective and $\D$ verifies the additional properties:
\begin{itemize}
\item[(i)] The \emph{degree} $\deg(\D):=\sum_{y\in Y}\D_{y}$ is strictly contained in $\sigma$.
\item[(ii)] For every $m\in\sigma^{\vee}$ such that $\min_{v\in\deg(\D)}\langle m, v\rangle =0$,
the divisor $\D(dm)$ is principal for some $d\in\ZZ_{>0}$.
\end{itemize}     
We denote by $\PP(Y, \sigma)$ the set of proper $\sigma$-polyhedral divisors on $Y$. If $Y$ is an affine curve, we make the convention that the degree of $\D$
is equal to the empty set. 
\\
 
It is known that for an affine variety $X$ there is a one-to-one correspondence between $\TT$-actions on $X$ and $M$-gradings on $\CC[X]$. The next result (see \cite[Theorems 3.1, 3.4]{AH06}) gives a combinatorial description of $M$-graded algebras corresponding to 
affine $\TT$-varieties of complexity one.
\begin{theorem}
\begin{itemize}
\item[(i)] Let $\sigma\subseteq N_{\QQ}$ be a strongly convex polyhedral cone and let $Y$ be a smooth curve.
If $\D\in \PP(Y, \sigma)$, then the $M$-graded subalgebra
$$A(Y,\D):=\bigoplus_{m\in\sigma^{\vee}\cap M}H^{0}(Y,\mathcal{O}_{Y}(\lfloor \D(m)\rfloor))\otimes \chi^{m}\subseteq \CC(Y)\otimes_{\CC} \CC[\TT],$$
where $\chi^{m}$ is the Laurent monomial corresponding to $m\in M$, defines an affine $\TT$-variety $X(\D) = X(Y,\D)$ of complexity one
with rational quotient $Y$.
\item[(ii)] Conversely, if $X$ is an affine $\TT$-variety of complexity one, then there exist a strongly convex polyhedral cone $\sigma\subseteq N_{\QQ}$,
a smooth curve $Y$, and $\D\in\PP(Y,\sigma)$ such that the $\TT$-variety $X$ is $\TT$-isomorphic to $X(\D)$.
\end{itemize}
\end{theorem}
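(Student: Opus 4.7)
The plan is to follow the original strategy of Altmann and Hausen \cite{AH06}, treating the existence direction (i) and the reconstruction direction (ii) separately; together they yield a bijection between isomorphism classes in $\PP(Y,\sigma)$ and affine complexity-one $\TT$-varieties.

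For part (i), I would verify four properties of $A(Y,\D)$: it is a domain, normal, finitely generated, and of the correct dimension. The domain property is immediate since $A(Y,\D)$ is a graded subring of the domain $\CC(Y)\otimes_{\CC}\CC[M]$. Normality follows from writing $\mathcal{O}_{Y}(\lfloor\D(m)\rfloor)$ as the pushforward of an invertible sheaf from an open subset (using that $Y$ is smooth), and from the fact that an intersection of normal subrings inside a common field is normal. Finite generation splits into two cases. If $Y$ is affine, each piece $H^{0}(Y,\mathcal{O}_{Y}(\lfloor\D(m)\rfloor))$ is a finitely generated $\CC[Y]$-module, and finite generation of the full algebra reduces to Gordan's lemma for the semigroup $\sigma^{\vee}\cap M$. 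If $Y$ is projective, properness conditions (i)--(ii) are essential: condition (i), $\deg(\D)\subsetneq\sigma$, controls the asymptotic growth of $H^{0}$ for weights in the interior of $\sigma^{\vee}$, while condition (ii) supplies principal divisors along the boundary, so that $A(Y,\D)$ embeds into a finitely generated Laurent polynomial ring over $\CC(Y)$ or a finite extension thereof. For the dimension, each interior weight $m$ has $A_{m}$ of rank one as a $\CC(Y)$-module, so $\dim X(\D) = 1 + \dim \TT$ and the complexity is one; the inclusion $\CC(Y)\hookrightarrow \mathrm{Frac}(A(Y,\D))$ on degree-zero fractions identifies $Y$ with the rational quotient.

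For part (ii), the task is to reverse-engineer the triple $(Y,\sigma,\D)$ from an effective $M$-grading $\CC[X]=\bigoplus_{m\in M}A_{m}$. The weight monoid $S=\{m : A_{m}\neq 0\}$ generates a full-dimensional rational cone $\omega\subset M_{\QQ}$ (full-dimensionality is exactly effectiveness of the $\TT$-action), and one sets $\sigma = \omega^{\vee}\subset N_{\QQ}$, which is then strongly convex. For the locus, take a smooth model $Y$ of the field of $\TT$-invariants $\CC(X)^{\TT}$, which has transcendence degree one over $\CC$ by the complexity hypothesis; one takes $Y$ projective or affine depending on whether $\CC[X]$ contains nontrivial homogeneous units. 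To produce $\D$, choose for each $m$ in the interior of $\omega\cap M$ a nonzero $f_{m}\in A_{m}$; then division by $f_{m}$ identifies $A_{m}\otimes_{\CC[Y]^{\TT}}\CC(Y)$ with $H^{0}(Y,\mathcal{O}_{Y}(D_{m}))$ for a $\QQ$-divisor $D_{m}$ built from the orders of vanishing of $f_{m}$ along the prime divisors of $Y$. The assignment $m\mapsto D_{m}$ is piecewise linear and concave, and a Legendre-type transform at each $y\in Y$ produces a $\sigma$-polyhedron $\D_{y}$ satisfying $D_{m} = \sum_{y}\min_{v\in\D_{y}}\langle m,v\rangle\cdot [y]$; the finite generation of $\CC[X]$ forces $\D_{y}=\sigma$ for all but finitely many $y$, and the properness conditions follow from normality and finite generation of $\CC[X]$ together with the chosen model of $Y$.

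The main technical obstacle is the reconstruction in part (ii), specifically in showing that the piecewise-linear system $\{D_{m}\}$ assembles into a genuine polyhedral divisor satisfying the properness conditions when $Y$ is projective. This requires a careful analysis along the rays of $\omega$, where the degree-$m$ piece can degenerate and condition (ii) of $\D$ must be extracted from the ring-theoretic structure of $\CC[X]$, reducing to the existence of sufficiently many principal divisors in suitable classes. A secondary delicate point is establishing independence of the reconstruction from the choices of $f_{m}$ and compatibility with the $\TT$-action, which amounts to the uniqueness (up to linear equivalence on $Y$) of the combinatorial datum attached to an effective $M$-graded finitely generated normal $\CC$-algebra.
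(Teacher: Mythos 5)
This statement is not proved in the paper: it is imported verbatim from Altmann--Hausen \cite[Theorems 3.1, 3.4]{AH06}, specialized to a one-dimensional base, so there is no internal argument to compare against. Your outline follows the same strategy as that source (verify domain/normality/finite generation/dimension for (i), reconstruct $(Y,\sigma,\D)$ from the $M$-grading for (ii)), which is the right skeleton, but two steps would not survive as written. The most serious one is the finite generation of $A(Y,\D)$ when $Y$ is projective, which is the crux of part (i): the assertion that $A(Y,\D)$ ``embeds into a finitely generated Laurent polynomial ring over $\CC(Y)$ or a finite extension thereof'' proves nothing, because a subalgebra of a finitely generated algebra need not be finitely generated. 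The actual mechanism is that the properness conditions make each $\D(m)$ semiample of the correct sign: for $m\in\sigma^{\vee}$ with $\min_{v\in\deg(\D)}\langle m,v\rangle>0$ the $\QQ$-divisor $\D(m)$ has positive degree on the curve, so the section spaces of its multiples grow linearly and generate a finitely generated ray of the algebra, while for $m$ on the boundary locus where the minimum vanishes, condition (ii) supplies a principal divisor $\D(dm)=\mathrm{div}(f)$ whose section $f\otimes\chi^{dm}$ generates that ray up to finitely many elements. One then assembles finitely many algebra generators using Gordan's lemma together with the superadditivity $\D(m)+\D(m')\leq\D(m+m')$. This argument has to be carried out; it cannot be replaced by an embedding statement.

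The second defect is in part (ii): the criterion ``$Y$ projective or affine depending on whether $\CC[X]$ contains nontrivial homogeneous units'' is incorrect. For $\AA^{2}$ with the $\G_{m}$-action $t\cdot(x,y)=(tx,ty)$ there are no nontrivial homogeneous units and the locus must be $\P^{1}$; for $t\cdot(x,y)=(tx,t^{-1}y)$ there are again none and the locus is $\AA^{1}$. The correct dichotomy is read off from the invariant subring $A_{0}=\CC[X]^{\TT}$: the locus can be taken affine exactly when $A_{0}$ has transcendence degree one over $\CC$ (then $Y$ is essentially the normalization of $\spec(A_{0})$ in $\CC(X)^{\TT}$), and must be taken to be the smooth projective model of $\CC(X)^{\TT}$ when $A_{0}=\CC$. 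Apart from these two points, the remainder of your reconstruction --- weight cone $\omega$, $\sigma=\omega^{\vee}$, the concave piecewise-linear family $m\mapsto D_{m}$ obtained from orders of vanishing, its fiberwise Legendre transform producing the polyhedra $\D_{y}$, finiteness of the support and properness extracted from finite generation --- is the correct outline and matches \cite{AH06}.
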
 
We refer to \cite[Section 8]{AH06} and \cite[Section 4]{Lan14} for the functorial properties and the uniqueness problem of this decomposition. In the next paragraph,
we explain the combinatorial description of \cite{AHS08} for (non-necessarily affine) $\TT$-varieties
specialized to the case of the complexity one. 
\subsection{}
\label{divisorial}
Let us fix a smooth curve $Y$. A \emph{divisorial fan} on $(Y,N)$ is a finite set $\EE = \{\D^{i}\,|\, i\in I\}$
with $\D^{i}\in \PP(Y_{i},\sigma_{i})$, where $Y_{i}\subseteq Y$ is a Zariski open dense subset and $\sigma_{i} \subseteq N_{\QQ}$
is a strongly convex polyhedral cone, satisfying the following conditions:
\begin{itemize}
\item[(i)] For all $i,j\in I$ we have 
$$\D^{i}\cap \D^{j} := \sum_{y\in Y_{ij}}(\D^{i}_{y}\cap \D^{j}_{y})\cdot [y]\in \EE,$$
where $Y_{ij} = \{y\in Y_{i}\cap Y_{j}\,|\, \D_{y}^{i}\cap \D_{y}^{j}\neq \emptyset\}$.
\item[(ii)] For all $i,j\in I$ and for every $y\in Y_{ij}$, the polyhedron $\D_{y}^{i}\cap \D_{y}^{j}$
is a common face of $\D_{y}^{i}$ and $\D_{y}^{j}$. 
\item[(iii)] The intersection of the degree of $\D^{i}$ with the tail $\sigma_{ij}$ of $\D^{i}\cap \D^{j}$
is equal to the intersection of the degree of $\D^{j}$ with $\sigma_{ij}$.
\item[(iv)] We have $Y = \bigcup_{i\in I}Y_{i}$.
\end{itemize}
By the preceding conditions, the set $\{\sigma_{i}\,|\,i\in I\}$
generates a fan denoted $\tail(\EE)$.
If $\EE$ is a divisorial fan on $(Y,N)$, then the natural morphisms (cf \cite[Section 5]{AHS08})
$$X(\D^{i})\leftarrow X(\D^{i}\cap \D^{j})\rightarrow X(\D^{j})$$
are $\TT$-invariant Zariski open immersions. The collection of $\TT$-varieties $X(\D^{i})$ can be glued 
in a $\TT$-variety $X(\EE)$ in which the Zariski open subsets $X(\D^{i}\cap \D^{j})$ are identified with
the intersections $X(\D^{i})\cap X(\D^{j})$ (see \cite[Remark 7.4 (ii)]{AHS08} for the fact that the $\CC$-scheme $X(\EE)$ is separated).

Conversely, if $X$ is a $\TT$-variety of complexity one, then there exist a smooth curve $Y$ and 
a divisorial fan $\EE$ on $(Y,N)$ such that the $\TT$-variety $X$ is $\TT$-isomorphic to $X(\EE)$ \cite[Theorem 5.6]{AHS08}. 
Note that the proof of this latter fact uses the Sumihiro theorem.  
\begin{remarque} Certain geometric properties of complexity-one $\TT$-varieties 
can be translated into the language of divisorial fans. For instance, $X(\EE)$ is a complete variety
if and only if $Y$ is a smooth projective curve and 
$$\bigcup_{i\in I}\D_{y}^{i} = N_{\QQ}\text{ for every }y\in Y$$
(c.f \cite[Theorem 7.5]{AHS08}). See also 
\cite[Chapter II]{KKMS73}, \cite[Section 2]{LT16} for a criterion of smoothness and 
\cite[Corollary 3.28]{PS11} for a criterion of projectivity. Moreover, $X(\EE)$ is contraction-free if and only if the locus of each element 
of $\EE$ is an affine curve.
\end{remarque}
Let us fix a divisorial fan $\EE$ on $(Y,N)$ such that $X(\EE)$ is contraction-free.
In the next paragraph, we recall the description of the orbits 
of $\TT$ on $X(\EE)$ in terms of the combinatorial object $\EE = \{\D^{i}\,|\,i\in I\}$ (see \cite[Section 7]{AH06}).
\subsection{}
\label{orbit}
Denote by $\pi: X(\EE)\rightarrow Y$ the quotient morphism where the restriction on
each open subset $X(\D^{i})$ is given by the inclusion $\CC[Y_{i}]\subseteq A(Y_{i},\D^{i})$ (we recall that $Y_{i}$
is the locus of $\D^{i}$). 
Given $y\in Y$, let us explain how to describe the orbits
of the reduced part $\pi^{-1}(y)_{\rm red}$. 
For a vertex $v\in\D^{i}_{y}$ consider the cone
$$\lambda(v) = \{m\in\sigma^{\vee}\,|\, \forall v'\in\D^{i}_{y},\,\langle m, v'-v\rangle \geq 0\}.$$
The irreducible components of $\pi^{-1}(y)_{\rm red}\cap X(\D^{i})$
are identified with the toric varieties $X_{\lambda(v), M_{v}}$ having weight cone $\lambda(v)$ and weight lattice 
$$M_{v} = \{m\in M\,|\, \langle m, v\rangle \in\ZZ\},$$
where $v$ runs the set of vertices of $\D_{y}^{i}$.

Denote by $\face(\EE)_{y}$ the set of faces of the polyhedra $\D_{y}^{i}$.
We have a bijection 
$$F\mapsto O(y,F)$$
between the set $\face(\EE)_{y}$ and the set of $\TT$-orbits of $\pi^{-1}(y)_{\rm red}$; the orbit $O(y, F)$
is of dimension $\codim(F) = \rank(N)-\dim(F)$ and is seen geometrically as 
a common part of the component $X_{\lambda(v), M_{v}}$
for every vertex $v$ of $F$.

Let 
$$\supp(\EE) := \{y\in Y\,|\, \face(\EE)_{y}\neq \tail(\EE)\}.$$
Then we have a natural equivariant identification
$$X(\EE)\setminus \pi^{-1}(\supp(\EE))_{\rm red}\simeq (Y\setminus \supp(\EE))\times X(\tail(\EE)),$$
where $X(\tail(\EE))$ is the toric variety for the torus $\TT$ associated with the fan $\tail(\EE)$. Indeed, the fibers of the quotient 
map $\pi$ over $Y\setminus \supp(\EE)$ are all isomorphic to $X(\tail(\EE))$. Since $\TT$ is a solvable linear algebraic group, the 
$\TT$-isomorphism above is obtained by the cross section theorem in \cite[Section 4, Theorem 10]{Ros56}.
\\

A \emph{prime $\TT$-cycle} (or a \emph{germ}) of $X(\EE)$ is a $\TT$-stable irreducible reduced Zariski closed subset of $X(\EE)$.
A combinatorial description of the prime $\TT$-cycles of $X(\EE)$ is given in \cite[Section 16.4]{Tim11}. We recall this in the 
next paragraph.
\subsection{}
\label{germ}
Let $\D$ be a $\sigma$-polyhedral divisor on a Zariski open dense subset
$Y_{0}\subseteq Y$. 
Assuming that $Y_{0}$ is affine, we define the set $H(\D)$ as the union
of the faces of $\sigma$ and of the pairs of the form $(y, F)$ where $y\in Y_{0}$ and $F$ is a  face of $\D_{y}.$
 
 There exists a bijection between $H(\D)$
and the set of prime $\TT$-cycles of $X(\D)$ given as follows. With the same notation as above, let $v$ be in the relative interior
of $F$ where $(y, F)\in H(\D)$ (resp. $F\in H(\D)$ if $F$ is a face of $\sigma$). Then we define a discrete valuation $\val_{y,v}$ (resp. $\val_{v}$) on $A(Y_{0},\D)$ via the formula   
$$\val_{y,v}(f\otimes\chi^{m}) = \ord_{y}(f) + \langle m, v\rangle \text{ resp. } \val_{v}(f\otimes\chi^{m}) = \langle m, v\rangle,$$
for every homogeneous element $f\otimes\chi^{m}\in A(Y_{0},\D)$. The associated prime $\TT$-cycle is given by the ideal
$$\{\gamma\in A(Y_{0},\D)\setminus\{0\}\,|\, \val_{y,v}(\gamma)>0 \text{ (resp. }\val_{v}(\gamma)>0) \}\cup\{0\}.$$

Let $\EE$ be a divisorial fan on $(Y,N)$ with $X(\EE)$ contraction-free. 
Then we denote by $H(\EE)$ the union of all the sets $H(\D)$ where $\D$ runs through $\EE$. Similarly, the set $H(\EE)$ is in bijection with the set of prime $\TT$-cycles of $X(\EE)$. 
In the sequel, we will denote by $V(\tau)$ the prime $\TT$-cycle associated with $\tau\in H(\EE)$. Note that in this case every
prime $\TT$-cycle of $X(\EE)$ is normal (see \cite[Theorem 7]{Tim00}).

\subsection{Preliminaries on intersection cohomology}
In this section, we fix the notation concerning the intersection cohomology theory that we will use
(see \cite{GM80, GM83}). We note that the cohomology sheaves are particularly sheaves of $\QQ$-vector spaces with respect to the Euclidean
topology. The perversity for the intersection cohomology groups is the middle perversity. We refer the reader to \cite{BBP82} for the theory of perverse sheaves.
\subsection{}
\label{betti1}
For a variety $X$ we denote by $\der$ the bounded derived category of complexes of sheaves of $\QQ$-vectors spaces. 

If $f:X\rightarrow Y$ is a morphism of varieties, then we will denote for simplicity by
$f_{!}, f_{\star}, f^{!}, f^{\star}$ the functors $Rf_{!}, Rf_{\star}, Rf^{!}, Rf^{\star}$ at the level of the derived categories $\der$ and $D^b(Y;\QQ)$.
We denote by $\QQ_X$ the constant sheaf on $X$ with rational coefficients.

By a \emph{local system} on $X$ we mean a locally constant sheaf of $\QQ$-vector spaces (for the Euclidean topology)
having finite dimensional stalks. A local system is \emph{simple} if has no non-trivial local subsystems (i.e., simple in the category 
of local systems) and \emph{semisimple} if it is direct sum of simple local systems.

Let $\L$ be a local system defined on a Euclidean open dense subset of the regular locus
of $X$. We denote by $IC_{X}(\L)$ the \emph{intersection cohomology complex} with coefficient $\L$. This intersection complex is defined as follows.
If $U$ is a smooth open dense subset of $X$ where $\L$ is defined and $\gamma:U\rightarrow X$ is the inclusion, then $IC_{X}(\L)$ 
is the image of the natural morphism
$${}^pH^{0}(\gamma_{!}\L[\dim(X)])\rightarrow {}^pH^{0}(\gamma_{\star}\L[\dim(X)]).$$
Here ${}^pH^{0}$ is the (zero-th) $\mathfrak{t}$-cohomology functor for the natural $\mathfrak{t}$-structure
on the complexes of constructible sheaves in $D^b(Y; \QQ)$, defining the category of perverse sheaves. Note that in the last formula we write by the same letter $\L$ the restriction of $\L$ on $U$.
If $\L$ is constant, then we let $IC_X = IC_{X}(\L)$.
\subsection{}
The rational intersection cohomology 
group $\IH^{\star}(X;\L)$ with coefficient $\L$ is defined via the hypercohomology 
of the intersection complex $IC_{X}(\L)$, i.e.,
$$\IH^{\star}(X;\L) = R^\star\Gamma(X, IC_{X}(\L)).$$ 
Again we denote by $\IH^{\star}(X;\QQ) = \IH^{\star}(X;\L)$ if $\L$ is constant.
The vector spaces $\IH^{\star}(X;\QQ)$ 
satisfy Poincar\'e duality and they are finitely generated (see \cite[Sections 3.2,3.3,4]{GM80}). 
We call $b_{j}(X) = \dim\, \IH^{j}(X;\QQ)$ the \emph{$j$-th intersection cohomology Betti number}. Furthermore, we have the equality
$$b_{j}(X) = 0\text{ if }j<0\text{ or }j>2d,$$ 
see \cite[Section 4.1]{GM83}. We denote by
$$P_{X}(t) = \sum_{j = 0}^{2d}b_{j}(X)\,t^{j}$$
the \emph{Poincar\'e polynomial} of $X$.
\\

In the particular case where $X$ is smooth, rational intersection cohomology
coincides with classical rational cohomology. The following paragraph explains how compute the rational cohomology groups of every smooth projective variety.

\subsection{}
\label{polynomial}
The \emph{Hodge--Deligne polynomial}
of a variety $X$ is defined by the relation  
$$E(X; u,v) = \sum_{p,q = 0}^{d}\sum_{j = 0}^{2d}(-1)^{j}h^{p,q}(H^{j}_{c}(X;\CC))u^{p}v^{q}\in\ZZ[u,v],$$
where $d = \dim(X)$ and $h^{p,q}(H^{j}_{c}(X;\CC))$ is the dimension of the $(p,q)$-type Hodge component in the
$j$-th cohomology group $H^{j}_{c}(X;\CC)$ with compact support. The polynomial $E(\,\star\,; u,v)$
satisfies the following properties:
\begin{itemize}
\item[(i)] If $Z$ is a Zariski closed subset of $X$ and $U = X\setminus Z$, then 
$$E(X; u, v) = E(Z ; u,v) + E(U ; u,v).$$ 
\item[(ii)] If $X_{1}$ and $X_{2}$ are two varieties, then
$$E(X_{1}\times X_{2}; u,v) =E(X_{1}; u, v)\cdot E(X_{2} ; u,v).$$ 
\item[(iii)] If $X$ is smooth and projective, then $P_{X}(t) = E(X; -t, -t)$.
\end{itemize}
For instance, if $Y$ is a smooth projective curve of genus $g$, then
$$E(Y; u,v) = uv - g(u+v) +1.$$ Hence $E(\P^{1}; u,v) = uv +1$, $E(\AA^{1}; u,v) = uv$, 
$E(\G_{m}; u,v) = uv -1$ and so
$$E(\G_{m}^{r}; u,v) = (uv-1)^{r}\text{ for every }r\in\ZZ_{\geq 0}.$$
\begin{remarque}
One can also define the intersection cohomology $E$-polynomial of a singular
quasi-projective variety in a similar
way as the Hodge--Deligne polynomial for classical cohomology.
We refer the reader to \cite[Section 3]{BB96} for some properties.
It would be interesting to describe this polynomial in the setting of $\TT$-varieties.
\end{remarque}

In the proof of Proposition \ref{decomp}, we will use the following technical lemma. It is a consequence of results in \cite[Section 4.2]{BBP82}.
We thank Simon Riche for communicating the idea of the proof of this lemma.
\begin{lemma}
\label{pullback}
Let $f:X\rightarrow Y$ be an  \'etale morphism between two varieties $X$ and $Y$. Then $f^{\star}IC_{Y}$ identifies
with the intersection complex $IC_{X}$. 
\end{lemma}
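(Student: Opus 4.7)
The plan is to verify that $f^{\star}IC_{Y}$ satisfies Deligne's axiomatic characterization of the intersection cohomology complex on $X$, which then yields a canonical isomorphism $f^{\star}IC_{Y}\simeq IC_{X}$. Recall that $IC_{Y}$ is uniquely determined up to isomorphism in the bounded constructible derived category $D^{b}_{c}(Y;\QQ)$ by three properties: it is constructible with respect to some Whitney stratification $\mathcal{S}$ of $Y$ whose open stratum $U$ lies in $Y_{\mathrm{reg}}$, its restriction to $U$ is $\QQ_{U}[\dim(Y)]$, and it satisfies the middle-perversity support and cosupport conditions (equivalently, it is self-dual under Verdier duality and is the intermediate extension of $\QQ_{U}[\dim(Y)]$ from the open stratum).

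Since $f$ is \'etale and finite, one has $\dim(X) = \dim(Y)$, smoothness is preserved in both directions so that $f^{-1}(Y_{\mathrm{reg}})\subseteq X_{\mathrm{reg}}$, and dimensions of Zariski closed subsets are preserved under $f^{-1}$. First I would pull back $\mathcal{S}$ along $f$ to a Whitney stratification $f^{-1}(\mathcal{S})$ of $X$ whose open stratum $f^{-1}(U)$ is contained in $X_{\mathrm{reg}}$ and Euclidean dense. Next, using that the functor $f^{\star}$ is exact, commutes with the cohomology sheaves $\H^{i}$, and sends constant sheaves to constant sheaves (all because $f$ is \'etale, hence a local homeomorphism in the Euclidean topology), I would obtain $f^{\star}IC_{Y}|_{f^{-1}(U)}\simeq \QQ_{f^{-1}(U)}[\dim(X)]$. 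Dimension-preservation on supports then delivers
$$\dim \supp \H^{i}(f^{\star}IC_{Y}) = \dim \supp \H^{i}(IC_{Y}) < -i \text{ for every } i > -\dim(X),$$
which is the required support condition, and for the cosupport condition I would invoke the canonical identification $f^{!}\simeq f^{\star}$ valid for \'etale $f$, so that $f^{\star}$ commutes with Verdier duality and preserves self-duality.

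The main subtlety is checking that the pulled-back stratification is genuinely adapted to $f^{\star}IC_{Y}$ and that the local system on the open stratum (trivial of rank one here) is transported correctly; once this is done, Deligne's uniqueness theorem closes the argument. A cleaner alternative route, probably the one used in \cite[Section 4.2]{BBP82}, is to observe that a finite \'etale morphism is a topological covering in the Euclidean topology, hence locally on $X$ a disjoint union of homeomorphisms onto open subsets of $Y$; since the intersection cohomology complex is characterized by local conditions, its pullback is computed sheet-by-sheet and coincides with $IC_{X}$.
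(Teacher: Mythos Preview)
Your argument is correct. Note, however, that the paper does not actually supply a proof of this lemma: it simply records, in the sentence preceding the statement, that the result ``is a consequence of results in \cite[Section 4.2]{BBP82}'' and then moves on. So there is no proof in the paper to compare against beyond that bare citation.

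What you have written is precisely an unpacking of that citation. The key input from \cite[Section 4.2]{BBP82} is the identification $f^{!}\simeq f^{\star}$ for a smooth morphism of relative dimension zero (in particular for an \'etale morphism), which is exactly what you invoke to obtain compatibility with Verdier duality and hence the cosupport condition. Your verification of the remaining axioms (normalization on the open stratum, support bound via preservation of dimension under $f^{-1}$) is straightforward and correct; the only point worth making explicit is that a finite \'etale morphism between varieties is surjective, so $f^{-1}(\supp\,\H^{i}(IC_{Y}))$ is non-empty whenever $\supp\,\H^{i}(IC_{Y})$ is, which justifies the equality of dimensions you wrote. Your alternative ``covering space'' description at the end is also valid and is arguably the most transparent way to see the statement.
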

\begin{proof}
Let $d = \dim(X)$.
If $U$ is a smooth open dense subset of $Y$ and $\gamma:U\rightarrow Y$ is the inclusion, then recall that $IC_{Y}$
is defined as the image of the natural morphism
$${}^pH^{0}(\gamma_{!}\QQ_{U}[d])\rightarrow {}^pH^{0}(\gamma_{\star}\QQ_{U}[d]).$$
Since the functor $f^{\star}$ is exact (see \cite[p.108-109]{BBP82}),
the complex $f^{\star}IC_{Y}$ is the image of the induced morphism
$${}^pH^{0}(f^{\star}\gamma_{!}\QQ_{U}[d])\rightarrow {}^pH^{0}(f^{\star}\gamma_{\star}\QQ_{U}[d]).$$
Let us consider the Cartesian diagram
$$\xymatrix{
    f^{-1}(U) \ar[d]^\beta \ar[r]^f & U  \ar[d]^\gamma\\
    X \ar[r]^{f}  & Y  
 }$$
where $\beta: f^{-1}(U)\rightarrow X$ is the inclusion. Note that $f^{-1}(U)$ is again a smooth dense open subset of $X$.
By base change we have the isomorphisms
$$f^{\star}\gamma_{!}\QQ_{U}[d]\simeq \beta_{!}f^{\star}\QQ_{U}[d]\simeq \beta_{!}\QQ_{f^{-1}(U)}[d],$$
in the derived category $\der$.
Since $f$ is \'etale, we have $f^{\star} = f^{!}$ (see \cite[Section 4.2.4]{BBP82}), hence by base change
$f^{\star}\gamma_{\star}\QQ_{U}[d] \simeq \beta_{\star}\QQ_{f^{-1}(U)}[d]$. Thus we obtain that $f^{\star}IC_{Y}$
is the image of
$${}^pH^{0}(\beta_{!}\QQ_{f^{-1}(U)}[d])\rightarrow {}^pH^{0}(\beta_{\star}\QQ_{f^{-1}(U)}[d])$$
which coincides with $IC_{X}$.
\end{proof}
From now on to simplify the notation we make the convention that the intersection cohomology complex $IC_{X}$ of an algebraic variety $X$ is the one defined before 
shifted by $\dim\, X$. In particular, $IC_{X} = \QQ_{X}[\dim\, X]$ provided that $X$ 
is smooth.
We enunciate the decomposition theorem of Beilinson--Bernstein--Deligne--Gabber
which allows us to describe the topology of singular proper algebraic maps (see \cite[Theorem 6.25]{BBP82}).
\begin{theorem}
Let $f:X\rightarrow Z$ be a proper (algebraic) morphism between varieties $X$ and $Z$. Then there exists a finite family $(Z_{\alpha}, \L_{\alpha}, d_{\alpha})$
where for every index $\alpha$, $Z_{\alpha}\subseteq Z$ is a smooth irreducible (algebraic) subvariety which is locally closed (for the Zariski topology),
$\L_{\alpha}$ is a semisimple local system on $Z_{\alpha}$ and $d_{\alpha}\in\ZZ$ such that we have an isomorphism
$$f_{\star}\,IC_{X}\simeq \bigoplus_{\alpha}(i_{\alpha})_{\star}\,IC_{\bar{Z}_{\alpha}}(\L_{\alpha})[-d_{\alpha}]$$
in the derived category $D^b(Z; \QQ)$, 
where $\bar{Z}_{\alpha}$ is the Zariski closure of $Z_{\alpha}$ in $Z$ and $i_{\alpha}:\bar{Z}_{\alpha}\rightarrow Z$
is the inclusion. This implies that for every Euclidean open subset $U\subseteq Z$ and for every $j\in\ZZ$, we have an isomorphism
of $\QQ$-vector spaces 
$$IH^{j}(f^{-1}(U);\QQ)\simeq \bigoplus_{\alpha}IH^{j-l_{\alpha}}(U\cap \bar{Z}_{\alpha}; \L_{\alpha}),$$
where $l_{\alpha} = \dim(X)-\dim(Z_{\alpha})+d_{\alpha}$. 
\end{theorem}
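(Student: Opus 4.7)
The plan is to follow the original arithmetic strategy of Beilinson--Bernstein--Deligne--Gabber: transfer the statement from the complex analytic setting to $\ell$-adic perverse sheaves on varieties over finite fields, then exploit Deligne's theory of weights. By a standard spreading-out argument, the proper morphism $f:X\rightarrow Z$ extends to a proper morphism $f_{R}:X_{R}\rightarrow Z_{R}$ over a finitely generated subring $R\subset\CC$, and picking a closed point $s\in\spec R$ with residue field a finite field $\mathbb{F}_{q}$ yields a proper morphism $f_{s}:X_{s}\rightarrow Z_{s}$ between varieties over $\mathbb{F}_{q}$. The comparison between $\ell$-adic \'etale cohomology and classical cohomology, together with proper and smooth base change, reduces the decomposition for $Rf_{\star}\,IC_{X}$ over $\CC$ to the analogous decomposition for $R(f_{s})_{\star}\,IC_{X_{s}}^{\ell}$ on the geometric fibre over $\bar{\mathbb{F}}_{q}$.

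Once in the arithmetic setting, I would appeal to Gabber's purity theorem, which says that the $\ell$-adic intersection complex $IC_{X_{s}}^{\ell}$ is pure of weight $\dim(X)$. Since $f_{s}$ is proper, Deligne's preservation-of-purity theorem from Weil~II guarantees that $R(f_{s})_{\star}\,IC_{X_{s}}^{\ell}$ is again pure. Choosing an $f$-ample class $\eta$ on $X$, the relative hard Lefschetz theorem, also due to Deligne in Weil~II, produces isomorphisms
$$\eta^{i}:{}^{p}\mathcal{H}^{-i}\bigl(R(f_{s})_{\star}IC_{X_{s}}^{\ell}\bigr)\xrightarrow{\ \sim\ }{}^{p}\mathcal{H}^{i}\bigl(R(f_{s})_{\star}IC_{X_{s}}^{\ell}\bigr),$$
and a formal degeneration argument of Deligne converts these into a (non-canonical) splitting
$$R(f_{s})_{\star}\,IC_{X_{s}}^{\ell}\simeq \bigoplus_{i\in\ZZ}{}^{p}\mathcal{H}^{i}\bigl(R(f_{s})_{\star}IC_{X_{s}}^{\ell}\bigr)[-i].$$

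Each perverse cohomology sheaf appearing in the splitting is pure, and the key structural result of BBD is that a pure perverse sheaf on a variety over $\bar{\mathbb{F}}_{q}$ is geometrically semisimple: it decomposes as a direct sum of simple perverse sheaves, which are precisely the intermediate extensions $(i_{\alpha})_{!\star}(\L_{\alpha}[\dim Z_{\alpha}])$ with $Z_{\alpha}\subseteq Z_{s}$ a smooth irreducible locally closed subvariety and $\L_{\alpha}$ an irreducible local system on $Z_{\alpha}$. Combining with the hard-Lefschetz splitting above yields the shape of the statement, the integer $d_{\alpha}$ recording the perverse-cohomology degree in which each simple summand sits. Descending back to $\CC$ via the comparison theorems furnishes the decomposition for $Rf_{\star}\,IC_{X}$. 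The concrete cohomological consequence on $f^{-1}(U)$ is then routine: apply $\mathbb{H}^{\bullet}(U;-)$ to both sides, use $\mathbb{H}^{k}(U;Rf_{\star}\,IC_{X})=\mathbb{H}^{k}(f^{-1}(U);IC_{X})$, and convert hypercohomology back to intersection cohomology via the convention recalled in \ref{betti1}, so that the shifts align to $l_{\alpha}=\dim(X)-\dim(Z_{\alpha})+d_{\alpha}$.

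The hard part is the pair of arithmetic inputs feeding into this scheme: Gabber's purity for intersection complexes, and the semisimplicity of pure perverse sheaves in positive characteristic. Both rest on the full force of Deligne's Weil~II and on a delicate analysis of weight filtrations on mixed $\ell$-adic perverse sheaves; once granted, the remaining ingredients (spreading out, the formal hard-Lefschetz degeneration, and \'etale-to-classical comparison) are essentially bookkeeping. A transcendental alternative is Saito's theory of mixed Hodge modules, where purity and semisimplicity are built into the formalism, or the more geometric approach of de~Cataldo--Migliorini based on classical Hodge theory and the Hodge--Riemann bilinear relations applied to resolutions of $X$; either route, however, requires a comparable amount of heavy machinery.
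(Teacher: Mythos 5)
This is one of the statements the paper does not prove at all: it is quoted verbatim as the Beilinson--Bernstein--Deligne--Gabber decomposition theorem, with the proof delegated to \cite[Theorem 6.25]{BBP82} and the survey \cite{CM09}. Your outline is therefore not in competition with an argument in the paper; it is a summary of the argument in the cited source, and as such it is accurate in its broad strokes: spreading out to a finitely generated subring of $\CC$, reduction to a finite field, Gabber's purity of the $\ell$-adic intersection complex, preservation of purity under proper pushforward (Weil~II), splitting into shifted perverse cohomology sheaves, geometric semisimplicity of pure perverse sheaves, classification of simple perverse sheaves as intermediate extensions of irreducible local systems, and comparison back to the classical topology. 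The passage from the sheaf-level decomposition to the statement about $IH^{j}(f^{-1}(U);\QQ)$ is indeed just hypercohomology bookkeeping with the shift conventions of \ref{betti1}.

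Two caveats. First, the statement is for \emph{proper} $f$, whereas your splitting of $R(f_{s})_{\star}IC^{\ell}_{X_{s}}$ into its shifted perverse cohomology sheaves is derived from the relative hard Lefschetz theorem and Deligne's degeneration criterion, which require an $f$-ample class, i.e.\ $f$ projective. For merely proper $f$ the correct route in BBD is the weight argument: a pure complex on a variety over a finite field becomes, after extension of scalars to $\bar{\mathbb{F}}_{q}$, a direct sum of shifted simple perverse sheaves (BBD 5.4.5--5.4.6), which gives both the splitting and the semisimplicity in one stroke; hard Lefschetz is a separate, additional statement. Second, and more importantly for assessing this as a ``proof'': every substantive step (Gabber's purity, Weil~II, the semisimplicity of pure perverse sheaves, the comparison theorems) is invoked as a black box, so what you have written is a roadmap rather than a proof. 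That is a perfectly reasonable thing to do here --- it is exactly what the paper does, only more tersely --- but it should be presented as a citation with commentary rather than as an independent argument.
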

In \cite{CMM15} the preceding theorem was made explicit in the case where $X$ and $Z$ are toric varieties
and $f$ is a proper toric map. For convenience we recall this result.
\begin{theorem}
\label{decomptoric}
Let $X,Z$ be toric varieties for the torus $\TT$. Let $\Sigma_{Z}$ be the fan defining $Z$.
For every $\tau\in\Sigma_{Z}$, denote by $V(\tau)$ the corresponding orbit closure. 
Consider a birational proper toric map $f:X\rightarrow Z$  given by a subdivision of fans. Then we have an isomorphism
$$f_{\star}\,IC_{X}\simeq \bigoplus_{\tau\in\Sigma_{Z}}\bigoplus_{b\in\ZZ}(i_{\tau})_{\star}\,IC_{V(\tau)}^{\oplus s_{\tau, b}}[-b]$$
in the derived category $D^b(Z; \QQ)$,
where $i_{\tau}:V(\tau)\rightarrow Z$ is the inclusion.
This implies that for every Euclidean open subset $U\subseteq Z$ and every $j\in\ZZ$,
we have an isomorphism of $\QQ$-vector spaces 
$$IH^{j}(f^{-1}(U);\QQ)\simeq \bigoplus_{\tau\in\Sigma_{Z}}\bigoplus_{b\in\ZZ}IH^{j-l_{\tau, b}}(U\cap V(\tau); \QQ)^{\oplus s_{\tau, b}},$$
where $l_{\tau, b} = b+\dim(X)-\dim(V(\tau))$. The sequence $s_{\tau, b}$ of natural numbers depends on the fan subdivision and satisfies the following conditions.
\begin{itemize}
\item[(i)] $s_{\tau, b} = s_{\tau, -b}$ for all $\tau\in\Sigma_{Z}$ and $b\in\ZZ$.
\item[(ii)] If $f$ is a projective morphism, then $s_{\tau, b}\geq s_{\tau, b+2l}$ for all $\tau\in\Sigma_{Z}$ and all $b,l\in\ZZ_{\geq 0}$.
\item[(iii)] $s_{\tau, b} = 0$ if $l_{\tau, b}$ is odd. 
\item[(iv)] $s_{0,0} = 1$ and $s_{0,b} = 0$ for every $b\in\ZZ\setminus\{0\}$.
\end{itemize}
\end{theorem}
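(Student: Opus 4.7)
The plan is to derive the statement from the general decomposition theorem recalled just above, exploiting the $\TT$-equivariance of $f$ and rigidity features specific to toric varieties.

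First, I would apply the general decomposition theorem to $f$ in order to obtain an abstract splitting
$$Rf_{\star}\,IC_{X}\simeq \bigoplus_{\alpha}R(i_{\alpha})_{\star}\,IC_{\bar{Z}_{\alpha}}(\L_{\alpha})[-d_{\alpha}].$$
Since $f$ is a toric morphism and $IC_{X}$ is canonically $\TT$-equivariant, the complex $Rf_{\star}\,IC_{X}$ is $\TT$-equivariant, hence every simple perverse summand is $\TT$-equivariant. The support $\bar{Z}_{\alpha}$ is therefore a $\TT$-stable irreducible closed subvariety of $Z$, which forces $\bar{Z}_{\alpha}=V(\tau)$ for a unique cone $\tau\in\Sigma_{Z}$, and one may take the smooth locally closed stratum $Z_{\alpha}$ to be the orbit $O(\tau)$.

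Next, I would show that the local systems $\L_{\alpha}$ are trivial. Each $O(\tau)$ is homogeneous under the quotient torus $\TT/\TT_{\tau}$, and a $\TT$-equivariant simple local system on such a quotient corresponds to a continuous finite-image character of the connected stabilizer $\TT_{\tau}$, which has to be trivial. Thus $\L_{\alpha}\simeq \QQ_{O(\tau)}$ and the abstract splitting takes the announced form with multiplicities $s_{\tau,b}\in\ZZ_{\geq 0}$. Lemma \ref{pullback} can be used locally to check compatibility of the intersection complex along equivariant \'etale slices.

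Finally, I would verify the four numerical properties in turn. Symmetry (i) follows from Verdier self-duality of $IC_{X}$ combined with properness of $f$: self-duality of $Rf_{\star}\,IC_{X}$ forces $s_{\tau,b}=s_{\tau,-b}$. Unimodality (ii) for projective $f$ is a standard consequence of the relative hard Lefschetz theorem applied to a relatively ample class. Parity vanishing (iii) rests on the fact that the fibers of a proper toric morphism are stratified by torus orbits and have cohomology concentrated in even degrees, so stalkwise both sides of the decomposition are concentrated in degrees of a fixed parity, forcing odd contributions to vanish. Property (iv) comes from birationality: over the open dense torus orbit of $Z$, $f$ is an isomorphism, so $IC_{Z}$ appears exactly once and with no shift in the decomposition, giving $s_{0,0}=1$ and $s_{0,b}=0$ for $b\neq 0$.

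The main obstacle I anticipate is Step 2, where one has to identify precisely the $\TT$-equivariant simple perverse sheaves supported on each $V(\tau)$; potential finite stabilizers arising from the lattice data of the subdivision must be handled carefully to rule out non-trivial monodromy. Establishing the parity statement (iii) in a conceptual rather than ad hoc manner is also delicate, since it requires transferring the even-degree vanishing from fiber cohomology to the whole direct image complex, typically through a stratified spectral sequence argument over $\Sigma_{Z}$.
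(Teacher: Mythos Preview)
The paper does not prove Theorem~\ref{decomptoric}; it is quoted from \cite{CMM14} (with the remark that assertion (iv) is extracted from \cite[Remark~7.1, Theorems~7.2, 7.4]{CMM14}). So there is no proof in the paper to compare your proposal against.

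That said, your outline is essentially the strategy of \cite{CMM14}: apply the general decomposition theorem, use $\TT$-equivariance to force the supports to be orbit closures, argue that the equivariant local systems on torus orbits are trivial, and then read off (i)--(iv) from Verdier duality, relative hard Lefschetz, parity of toric fiber cohomology, and birationality over the open orbit. Your self-identified weak points are the right ones. For the triviality of the local systems, the cleanest argument is not via stabilizer characters but via simple connectedness: each orbit $O(\tau)$ is isomorphic to a torus $(\CC^{\star})^{r}$, whose fundamental group is free abelian, and the equivariant condition forces the monodromy to factor through a finite quotient on which the connected group $\TT$ acts trivially; alternatively one restricts to a $\TT$-fixed point in the closure and uses contracting $\G_m$-actions. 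For (iii), the even-degree concentration is established in \cite{CMM14} by an inductive computation of the stalks of $Rf_{\star}IC_X$ at torus-fixed points, not by a general spectral sequence; your proposed route would work but requires care to pass from fiber cohomology to stalk cohomology of the direct image.
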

Note that assertion (iv) in Theorem \ref{decomptoric} is a consequence of \cite[Remark 7.1, Theorems 7.2,7.4]{CMM15}.
The following terminology will be useful for the sequel.
\begin{definition}
\label{sequence}
With the same notation as in \ref{decomptoric}, let us denote by $\Sigma_{X}$ the fan defining the toric variety $X$.
The sequence of coefficients $s_{\tau, b}$ ($\tau$ and $b$ run respectively $\Sigma_{Z}$ and $\ZZ$) involving
in Theorem \ref{decomptoric} will be called an \emph{$s$-sequence} of the subdivision $\Sigma_{X}$ of $\Sigma_{Z}$.
\end{definition}

Let us end this section by recalling the notion of $h$-vectors which describes the 
intersection cohomology Betti numbers of projective toric varieties (see \cite[Theorem 3.1]{Sta87} and \cite{DL91, Fie91}).
\subsection{}
\label{hpol}
We will consider here the empty set $\emptyset$ as a common face of every polytope.
Let us define two polynomials $h(\,\star\,, t), g(\,\star\,, t)\in \ZZ[t]$ by induction \cite[Section 2]{Sta87},
where the first entry $\star$ depends on a polytope. 
Let $g(\emptyset, t) = 1$ and $\dim(\emptyset) =-1$.
Assume that $g(\,\star\,, t)$ is defined for every polytope of dimension $<d$. Then for a polytope $Q$
of dimension $d$, we let 
$$h(Q,t) = \sum_{\Lambda\subseteq Q}g(\Lambda,t) (t-1)^{d-1-\dim(\Lambda)},$$
where the sum runs through the set of proper faces of $Q$ (included the empty set $\emptyset$). Let us denote by $h_{k}(Q)$ the $k$-th coefficient 
of $h(Q,t)$. One defines $g(Q,t)$ by the relation
$$g(Q,t) = \sum_{k = 0}^{\lfloor d/2\rfloor}(h_{k}(Q) - h_{k-1}(Q))t^{k}\text{ with } h_{-1}(Q) = 0.$$
The polynomial $h(Q,t)$ is called the \emph{$h$-polynomial} of $Q$ and $(h_{0}(Q),\ldots, h_{d}(Q))$ is the \emph{$h$-vector} of $Q$. 
More generally, we will let $h_{j}(Q) = 0$ for any $j\in\ZZ_{<0}$. 
We refer to \cite{Fie91} for a geometric meaning of $h(\,\star\,, t)$ and $g(\,\star\,, t)$.  
\begin{theorem}
\label{hvector}
Let $X$ be a projective toric variety defined by a fan $\Sigma_{X}$. Let
us fix a polytope $Q\subseteq M_{\QQ}$ such that $\Sigma_{X}$ is the normal fan of $Q$. Then we have
the equality $P_{X}(t) = h(Q,t^{2})$.
\end{theorem}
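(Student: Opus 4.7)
The plan is to induct on $\dim X$, using a smooth projective toric refinement $\Sigma'$ of $\Sigma_X$ (which exists by standard toric theory and can be chosen as the normal fan of some polytope $Q'$) and applying the decomposition theorem (Theorem \ref{decomptoric}) to the resulting resolution $f: X' \to X$. The base case is the smooth projective case, which I would treat first by a direct orbit computation.

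For the smooth projective case, I would stratify $X$ by its $\TT$-orbits $O_\tau \simeq \G_{m}^{\dim X - \dim \tau}$ indexed by $\tau \in \Sigma_X$. Using the additivity and multiplicativity of the Hodge--Deligne polynomial recorded in \ref{polynomial}, together with $E(\G_{m}^{r}; u, v) = (uv - 1)^{r}$, one obtains
$$E(X; u, v) = \sum_{\tau \in \Sigma_{X}}(uv - 1)^{\dim X - \dim \tau}.$$
Since $X$ is smooth projective, $P_{X}(t) = E(X; -t, -t)$ by \ref{polynomial}(iii); grouping cones by codimension (which, by normal-fan duality, is the dimension of the corresponding face of $Q$) yields $P_{X}(t) = \sum_{i}d_{i}(Q)(t^{2}-1)^{i}$, which is precisely $h(Q, t^{2})$ by the smooth-case definition recalled in \ref{hpol}.

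For the inductive step I would apply Theorem \ref{decomptoric} to $f$ and pass to global sections to obtain
$$P_{X'}(t) = \sum_{\tau \in \Sigma_{X}}\Bigl(\sum_{b \in \ZZ}s_{\tau, b}\,t^{b + \dim \tau}\Bigr)P_{V(\tau)}(t).$$
By the smooth case applied to $X'$, the left-hand side equals $h(Q', t^{2})$. For every $\tau \neq \{0\}$, the closure $V(\tau)$ is a projective toric variety of dimension $\dim X - \dim \tau < \dim X$, so by the induction hypothesis $P_{V(\tau)}(t) = h(\Lambda_{\tau}, t^{2})$, where $\Lambda_{\tau}$ denotes the face of $Q$ dual to $\tau$. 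Using \ref{decomptoric}(iv), the $\tau = \{0\}$ summand simplifies to $P_{X}(t)$, so one can solve for $P_{X}(t)$.

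The main obstacle lies in the concluding combinatorial step: identifying the auxiliary polynomials $\sum_{b}s_{\tau, b}\,t^{b}$ with the $g$-polynomials (in the sense of \ref{hpol}) attached to the fibers of $f$ over the generic point of $V(\tau)$, using the finer computation of the $s$-sequence in \cite{CMM14} together with the symmetry and Hard-Lefschetz constraints \ref{decomptoric}(i),(ii). Once this identification is established, the relation
$$h(Q', t^{2}) = P_{X}(t) + \sum_{\tau \neq \{0\}}\bigl(t^{\dim \tau}\textstyle\sum_{b}s_{\tau, b}\,t^{b}\bigr)h(\Lambda_{\tau}, t^{2})$$
reduces to the classical behavior of $h$-polynomials under polytopal subdivision, and comparison with the recursion in \ref{hpol} forces $P_{X}(t) = h(Q, t^{2})$, completing the induction.
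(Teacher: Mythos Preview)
The paper does not prove Theorem~\ref{hvector}; it is quoted as a known result with references to \cite{Sta87}, \cite{DL91}, and \cite{Fie91}. So there is no in-paper argument to compare against, and your proposal has to be judged on its own.

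Your outline is essentially the approach of \cite{Fie91} and \cite{DL91}: resolve by a smooth (simplicial) refinement, apply the decomposition theorem, and induct on dimension. The architecture is sound, but you correctly flag---and do not close---the central gap: the identification of $\sum_b s_{\tau,b}\,t^{b}$ with the relevant $g$-polynomial. That step is the whole content of the theorem and does not follow from the symmetry and hard-Lefschetz constraints in Theorem~\ref{decomptoric}(i),(ii) alone; one has to compute the stalks of $Rf_{\star}IC_{X'}$ at a point of $O(\tau)$, i.e.\ the local intersection cohomology of the affine toric cone transverse to $V(\tau)$, and see that it satisfies exactly the recursion defining $g$. Without carrying this out, your final sentence (``reduces to the classical behavior of $h$-polynomials under polytopal subdivision'') is circular: that classical behavior is established in \cite{Sta87} \emph{via} Theorem~\ref{hvector}, not independently of it.

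There is also a smaller issue in your smooth base case. The identity $\sum_i d_i(Q)(t^2-1)^i = h(Q,t^2)$ is not the content of \ref{hpol}; that paragraph gives only the recursive definition. The coincidence of the two formulas for simple polytopes is a separate (easy but non-empty) combinatorial lemma---the paper alludes to it in the introduction but does not record it in \ref{hpol}---so you should either prove it or cite it explicitly.
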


\section{Betti numbers of complexity-one smooth $\TT$-varieties}
\label{betti}
The purpose of this section is to give an explicit description of the classical Betti
numbers of every smooth projective $\TT$-variety of complexity one. Before stating our results,
let us introduce some notation.
\subsection{}
\label{polytopes}
Let $\EE$ be a divisorial fan on $(Y,N)$ such that $X(\EE)$ is contraction-free. Let $\D\in \EE$ and let $y\in Y$ be a point belonging to the locus of $\D$. The \emph{associated  Cayley cone} of $\D$ at the point $y$ is the subset $\hc(\D)_{y}$ in $N_{\QQ}\oplus\QQ$ generated by $(\sigma\times\{0\})\cup(\D_{y}\times\{1\})$. 
We denote by $\hc^{-}(\D)$ the cone generated by the subset
$$(\sigma\times\{0\})\cup(\sigma\times\{-1\})\subseteq N_{\QQ}\oplus\QQ,$$
where $\sigma$ is the tail of $\D$. 
We denote by $\EE_{y}^{+}$ resp. $\EE_{y}$ the fan generated by 
$$\{\hc(\D)_{y}\,|\,\D\in \EE \text{ with }y\text{ in the locus of } \D\}$$ 
$$\text{ resp. }\{\hc^{-}(\D), \hc(\D)_{y}\,|\,\D\in \EE\text{ with }y\text{ in the locus of } \D\}.$$
Note that in the last expression $\D$ is no longer fixed.
If $X(\EE)$ is projective, then $\EE_{y}$ is a complete fan
which is a normal fan of a polytope $Q_{y}(\EE)$ (compare with \cite[Corollary 3.28]{PS11}).
The same holds for the fan $\tail(\EE)$; we denote by $Q(\EE)$ a polytope such that $\tail(\EE)$ is its normal fan.
\\

Let us illustrate the preceding paragraph by an example where we consider a toric variety with an action
of a codimension-one subtorus.
\begin{example}
\label{example1}
Consider the divisorial fan
$$\EE = \{\D^{i,0},\D^{i,\infty}\,|\, i\in I\}$$
on $(\P^{1}, N)$, where $\D^{i,0}\in \PP(\P^{1}\setminus \{0\}, \sigma_{i})$ and $\D^{i,\infty}\in\PP(\P^{1}\setminus \{\infty\}, \sigma_{i})$. 
Assume that $\D^{i,0}_{y} = \D^{i,\infty}_{y} = \sigma_{i}$ for all $i\in I$ and $y\in \P^{1}\setminus \{0\}$.
Indentifying
$\P^{1}\setminus \{0\}$ resp. $\P^{1}\setminus\{\infty\}$ with $\spec(\CC[t^{-1}])$ resp. $\spec(\CC[t])$ we have the equality
$$A(\P^{1}\setminus\{\infty\}, \D^{i, \infty}) = \CC[\hc(\D^{i, \infty})_{0}^{\vee}\cap (M\oplus\ZZ)],$$
for every $i\in I$, where the right-hand side is the semigroup algebra of $$\hc(\D^{i, \infty})_{0}^{\vee}\cap (M\oplus\ZZ).$$
Similarly, we have
$$A(\P^{1}\setminus\{0\}, \D^{i, 0}) = \CC[\hc^{-}(\D^{i, 0})^{\vee}\cap (M\oplus\ZZ)].$$
Consequently, $X(\EE)$ is the toric variety for the torus $\big$ associated with the fan $\EE_{0}$.
\end{example}
The next result gives in particular an explicit formula of the Poincar\'e polynomial
of each smooth projective contraction-free $\TT$-variety of complexity one.
We consider this result in the general setting of quotient singularities 
since it will be needed for the proof of Theorem \ref{main}. 
\begin{proposition}
\label{proposition}
Let $X(\EE)$ be a projective contraction-free $\TT$-variety of complexity one
corresponding to a divisorial fan $\EE$ on $(Y, N)$.
Denote by $g$ the genus of the curve $Y$ and by $r$ the cardinality of the finite set $\supp(\EE)$.
Assume that every cone of $\EE_{y}$ is generated by a subset of a basis of $N_{\QQ}\oplus\QQ$ for all $y\in Y$.
Then we have the equality
$$P_{X(\EE)}(t) =  ((1-r)t^{2} + 2gt + 1- r)h(Q(\EE); t^{2}) + \sum_{y\in \supp(\EE)}h(Q_{y}(\EE); t^{2}).$$
\end{proposition}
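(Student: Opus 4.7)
The strategy is to mimic the computation carried out in Lemma \ref{lemma1}, but this time over a base curve $Y$ of arbitrary genus and with an arbitrary finite set $\supp(\EE)$ of special fibers, and then to combine the resulting expression with the identity furnished by Lemma \ref{lemma1} applied \emph{pointwise} at each $y\in\supp(\EE)$. Under the hypothesis on the cones of $\EE_{y}$, the variety $X(\EE)$ has at worst quotient singularities; in particular, its rational cohomology is pure, Poincar\'e duality holds, and the intersection cohomology coincides with the ordinary cohomology, so that $P_{X(\EE)}(t)=E(X(\EE);-t,-t)$ via property \ref{polynomial}~(iii).

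First, I would stratify $X(\EE)$ using the quotient morphism $\pi:X(\EE)\to Y$ described in \ref{orbit}. Over $Y\setminus\supp(\EE)$, the morphism $\pi$ is the trivial fibration $(Y\setminus\supp(\EE))\times X(\tail(\EE))$, while over each $y\in\supp(\EE)$ the reduced fiber $\pi^{-1}(y)_{\rm red}$ decomposes, at the level of constructible sets, as the disjoint union of the torus orbits $O(y,F)$ for $F\in\face(\EE)_{y}$, each of dimension $\codim(F)$. Using additivity and multiplicativity of $E$-polynomials, together with the formulas $E(Y;u,v)=uv-g(u+v)+1$, $E(\G_{m}^{k};u,v)=(uv-1)^{k}$, and $P_{X(\tail(\EE))}(t)=h(Q(\EE);t^{2})$ from Theorem \ref{hvector}, I obtain after substituting $u=v=-t$ the preliminary equality
$$P_{X(\EE)}(t) = (t^{2}+2gt+1-r)\,h(Q(\EE);t^{2}) + \sum_{y\in\supp(\EE)}\sum_{F\in\face(\EE)_{y}}(t^{2}-1)^{\codim(F)}.$$

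The final step is to recognize that the difference between this preliminary formula and the target expression reduces, after subtracting the two and dividing through, to the pointwise identity
$$h(Q_{y}(\EE);t^{2}) = t^{2}\,h(Q(\EE);t^{2}) + \sum_{F\in\face(\EE)_{y}}(t^{2}-1)^{\codim(F)}$$
for each $y\in\supp(\EE)$. This is exactly the content of Lemma \ref{lemma1}: the polytopes $Q(\EE)$ and $Q_{y}(\EE)$ depend only on the local combinatorial data at $y$, namely on $\tail(\EE)$ and the fan $\EE_{y}$, so one may produce an auxiliary divisorial fan on $\P^{1}$ of the form considered in Example \ref{example1} whose tail fan is $\tail(\EE)$ and whose fan at the origin is $\EE_{y}$, and apply Lemma \ref{lemma1} to this auxiliary model. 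Substituting this identity term by term regroups the coefficients of $h(Q(\EE);t^{2})$ into $(1-r)t^{2}+2gt+1-r$ and replaces the double sum by $\sum_{y}h(Q_{y}(\EE);t^{2})$, producing the stated formula.

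The only delicate point is the initial reduction $P_{X(\EE)}(t)=E(X(\EE);-t,-t)$ in the merely $\QQ$-factorial (quotient-singularity) case, rather than the fully smooth one in which property \ref{polynomial}~(iii) is directly stated: one needs to invoke rational smoothness of quotient singularities so that Hodge purity persists. Once this is in hand, the rest of the argument is the additive bookkeeping of $E$-polynomials together with a single, purely combinatorial invocation of Lemma \ref{lemma1} at each special point.
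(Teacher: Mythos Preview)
Your proposal is correct and follows essentially the same route as the paper: stratify $X(\EE)$ via the quotient map $\pi$, compute the $E$-polynomial additively over the generic part $(Y\setminus\supp(\EE))\times X(\tail(\EE))$ and the orbit decompositions of the special fibers, substitute $u=v=-t$, and then invoke Lemma~\ref{lemma1} at each $y\in\supp(\EE)$ to convert $t^{2}h(Q(\EE);t^{2})+\sum_{F}(t^{2}-1)^{\codim(F)}$ into $h(Q_{y}(\EE);t^{2})$.

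The one place where the paper proceeds differently is the treatment of the merely simplicial (non-smooth) case, which you flag as the delicate point. Rather than arguing directly that $P_{X(\EE)}(t)=E(X(\EE);-t,-t)$ via purity for quotient singularities, the paper first establishes the formula under the assumption that $X(\EE)$ is genuinely smooth (where property~\ref{polynomial}(iii) applies verbatim and where \cite[Section 5]{LS13} guarantees each toric variety $X(\EE_{y})$ is smooth, so Lemma~\ref{lemma1} applies on the nose), and then in the simplicial case realizes $X(\EE)=X(\hat{\EE})/G$ for a smooth $X(\hat{\EE})$ and a finite abelian $G$ obtained by refining the lattice. Rational smoothness gives $H^{j}=IH^{j}$, the quotient map by $G$ induces isomorphisms on rational cohomology, and the polytopes $Q(\EE)$, $Q_{y}(\EE)$ may be chosen to coincide with those for $\hat{\EE}$, so the formula transfers. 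This sidesteps having to justify $P=E(-t,-t)$ outside the smooth projective setting, which your approach would otherwise need both for $X(\EE)$ itself and for the auxiliary toric model feeding into Lemma~\ref{lemma1}.
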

\begin{proof}
We first compute the Poincar\'e polynomial $P_{X(\EE)}(t)$ with the assumption that $X(\EE)$ is smooth.
Using again the method with $E$-polynomials we have
$$E(X(\EE); u, v) = E((Y\setminus \pi^{-1}(\supp(\EE))_{\rm red})\times X(\tail(\EE)); u, v) + 
E(\pi^{-1}(\supp(\EE))_{\rm red}; u , v)$$
$$ = (uv - (u + v)g + 1 - r)E(X(\tail(\EE)); u, v ) + \sum_{y\in \supp(\EE)}\sum_{F\in\face(\EE)_{y}}(uv - 1)^{\codim(F)}.$$
By substituting $u = v = -t$, it follows that
$$P_{X(\EE)}(t) = \sum_{y\in \supp(\EE)}\left[t^{2}h(Q(\EE); t^{2}) + \sum_{F\in\face(\EE)_{y}}(t^{2} - 1)^{\codim(F)}\right]$$
$$+ ((1-r)t^{2} + 2gt + 1-r)h(Q(\EE); t^{2}).$$ 
According to \cite[Chapter II]{KKMS73} the smoothness assumption on $X(\EE)$ implies that the toric variety $X(\EE_{y})$ associated with the fan
$\EE_{y}$ is smooth for every $y\in Y$. By using the description of the fiber of quotient map in \cite[Proposition 7.10]{AH06} on the right-hand side 
($\EE_{0}$ of Example \ref{example1} is replaced by $\EE_{y}$), we obtain the required formula.

In the general case, one can realize $X(\EE)$ as the quotient $X(\hat{\EE})/G,$
where $X(\hat{\EE})$ is smooth and $G$ is a finite abelian group by changing the ambient lattice $N$. Hence $X(\EE)$ is rationally smooth \cite[Proposition A1 iii)]{Bri99} and so 
$$H^{j}(X(\EE); \QQ) = IH^{j}(X(\EE); \QQ)\text{ for any $j\in\ZZ$.}$$ 
To obtain the last equalities, we may adapt \cite[Proposition 8.2.21]{HTT08} for the intersection cohomology with rational coefficients. Since the quotient map by $G$ induces isomorphisms between
rational cohomology groups of $X(\EE)$ and $X(\hat{\EE})$ we conclude that $P_{X(\EE)}(t) = P_{X(\hat{\EE})}(t)$. 
Finally, we can make the choice to take $Q(\EE)$ and $Q_{y}(\EE)$ so that 
$$Q(\EE) = Q(\hat{\EE})\text{ and }Q_{y}(\EE) = Q_{y}(\hat{\EE})$$ for every $y\in Y$.
This finishes the proof of the proposition. 
\end{proof}
\subsection{}
Actually, the $h$-vectors of simplicial polytopes are simple to describe (compare Theorem \ref{hvector} with \cite[Corollary 3.9]{CMM15}) and
the formula of Proposition \ref{proposition} can be expressed as the relation 
$$P_{X(\EE)}(t) = ((1-r)t^{2}+2gt+1-r)\left(\sum_{i=0}^{d-1}f_{i}(Q(\EE))(t^{2}-1)^{i}\right)$$ 
$$+\sum_{y\in\supp(\EE)}\sum_{i=0}^{d-1}f_{i}(Q_{y}(\EE))(t^{2}-1)^{i},$$
where $d = \dim(X(\EE))$ and for any simplicial polytope $Q$, we denote by $f_{i}(Q)$ the number of faces of dimension $i$.
\\

In the next paragraphs, we complete the computation of Betti numbers of smooth projective $\TT$-varieties of complexity one.
Note that according to the Luna slice theorem \cite[Chapter III]{Lun73} every smooth non-contraction-free $\TT$-variety of complexity one is automatically 
rational (see also \cite[Section 2]{LT16}). Hence we may restrict ourselves to divisorial fans on the projective line. 

We want to obtain a formula similar
to \ref{proposition} where orbits in special and non-special fibers are separated.
More precisely, the idea is to use the result \cite[Theorem 10.1]{AH06}
describing in particular the rational quotient of a non-contraction-free
$\TT$-variety of complexity one. 

\subsection{}
\label{Phi}
Let $\EE$ be a divisorial fan on $(\P^{1},N)$ such that $X(\EE)$ is complete.
Let us introduce specific notation. We denote by $\bfan(\EE)$ the subset of elements $\tau\in\tail(\EE)$ with the property that 
there exists $\D\in\EE$ such that $\tau\cap \deg(\D)= \emptyset$.

We will consider the set $$\Phi_{\star} = \{(y, F)\, |y\in\supp(\EE)\text{ and } F\in\face(\EE)_{y}\}.$$ 
Moreover, we denote by $\Phi$ the quotient set $\Phi_{\star}/\mathscr{R}$, where the equivalence relation $\mathscr{R}$
is defined as follows. We have $(y, F)\,\RR\, (y',F')$ if and only if $F$ and $F'$ have the same tail cone and this tail does not belong to $\bfan(\EE)$.
The class of $(y,F)$ modulo $\mathscr{R}$ will be denoted by $[y,F]$. Finally, let us remark that the map
$$\codim:\Phi\rightarrow \ZZ_{\geq 0},\,\,[y,F]\mapsto\codim(F)$$
is well defined and so for every integer $i\in\ZZ_{\geq 0}$ we can let
$$\Phi_{i} = \{[y,F]\in\Phi\,|\, \codim(F) = i\}.$$ 
\begin{proposition}
\label{proposition2}
Let $\EE$ be a divisorial fan on $(\P^{1}, N)$ such that $X(\EE)$ is smooth and projective. 
Let $r$ be the cardinality of $\supp(\EE)$.
With the same notation 
as in \ref{Phi}, we have the equality
$$P_{X(\EE)}(t) = (t^{2}+1-r)\left(\sum_{i = 0}^{d-1}c_{i}(t^{2}-1)^{i}\right)+ \sum_{i = 0}^{d-1}\sharp \Phi_{i}\cdot (t^{2}-1)^{i},$$
where $d = \dim(X(\EE))$ and $c_{i}$ is the number of cones of $\bfan(\EE)$ of codimension $i$.
\end{proposition}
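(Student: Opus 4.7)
The plan is to compute $P_{X(\EE)}(t)$ via the $E$-polynomial method, exactly as in the proof of Proposition \ref{proposition}: since $X(\EE)$ is smooth and projective, $P_{X(\EE)}(t) = E(X(\EE); -t, -t)$, and the $E$-polynomial is additive over a locally closed stratification by $\TT$-orbits. What changes here is that, because $X(\EE)$ is non-contraction-free, there must be some $\D \in \EE$ whose locus is $\P^{1}$, and the tail cones of such $\D$ are exactly the marked maximal cones; this forces the $\TT$-orbit stratification to have three qualitatively different types of strata.

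First I would stratify $X(\EE)$ by $\TT$-orbits using \ref{orbit} together with the adaptation for the non-contraction-free case. The strata fall into three classes:
\begin{itemize}
\item[(a)] For each cone $\tau \in \bfan(\EE)$ of codimension $i$, a ``horizontal'' orbit family of the form $(\P^{1} \setminus \supp(\EE)) \times O(\tau)$, with $O(\tau) \simeq \G_m^{i}$. This uses \cite[Proposition 3.13]{PS11} identifying $\bfan(\EE)$ with the cones whose orbit closures are transverse to the rational quotient.
\item[(b)] For each cone $\tau \in \tail(\EE) \setminus \bfan(\EE)$ of codimension $i$, a single $\TT$-orbit isomorphic to $\G_m^{i}$, lying entirely in the undefined locus of the rational quotient map (i.e., coming from a $\D$ with locus $\P^{1}$, whose corresponding $\QQ$-divisor $\D(m)$ fails the bigness condition).
\item[(c)] For each class $[y, F] \in \Phi_{i}$, a single vertical orbit of dimension $i$, isomorphic to $\G_{m}^{i}$. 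The equivalence relation $\RR$ precisely records the geometric identification of orbits $O(y, F)$ and $O(y', F')$ that live inside a single horizontal orbit closure attached to a marked tail cone.
\end{itemize}

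Then I would combine the $E$-polynomial computations $E(\P^{1} \setminus \supp(\EE); u, v) = uv + 1 - r$ and $E(\G_m^{i}; u, v) = (uv-1)^{i}$ from \ref{polynomial} with additivity and multiplicativity to get
\[
E(X(\EE); u, v) = (uv + 1 - r)\sum_{i=0}^{d-1} c_{i}(uv - 1)^{i} + \sum_{i=0}^{d-1} d_{i}(uv - 1)^{i} + \sum_{i=0}^{d-1} e_{i}(uv - 1)^{i}.
\]
The substitution $u = v = -t$ yields the claimed formula.

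The main obstacle is the bookkeeping in Step~(c), namely verifying that the set $\Phi$ (with the relation $\RR$ and the exclusion condition on $\lambda(\D, F, y)$) is exactly the index set of the vertical strata not already accounted for in (a) or (b). Concretely, one has to show: (i)~if $\sigma$ is a marked maximal tail cone carried by $\D$ with locus $\P^{1}$, and $F \subset \D_{y}$, $F' \subset \D_{y'}$ satisfy $\lambda(\D, F, y) = \lambda(\D, F', y')$, then $O(y, F) = O(y', F')$ as orbits of $X(\D)$; (ii)~the condition that $\lambda(\D, F, y)$ is \emph{not} contained in a dual face of a cone of $\bfan(\EE)$ is what distinguishes the orbits counted in $e_{i}$ from those already inside the generic family of (a). Both points are combinatorial translations of the orbit description \ref{orbit} and of \cite[Section 16.4]{Tim11} applied to marked polyhedral divisors; once they are settled, the three contributions fit together without redundancy and the formula follows.
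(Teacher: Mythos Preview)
Your proposal is correct and takes essentially the same approach as the paper. The paper organizes the three-part stratification by introducing the proper birational contraction $q:\tilde{X}\to X(\EE)$ from the contraction-free model (glued from the relative spectra over each locus), writing $E(X(\EE);u,v)$ as the sum of the $E$-polynomials of the $q$-images of the three corresponding pieces of $\tilde{X}$, and invoking \cite[Theorem 10.1]{AH06} for the orbit description of each image---this reference handles precisely the bookkeeping you flag as the main obstacle in step~(c).
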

\begin{proof}
Let us consider the equivariant map $q:\tilde{X}\rightarrow X(\EE)$ obtained by gluing the natural maps
$$\spec_{Y_{0}}\left(\bigoplus_{m\in\sigma^{\vee}\cap M}\mathcal{O}_{Y_{0}}(\lfloor \D(m)\rfloor)\otimes\chi^{m}\right)\rightarrow X(\D),$$
where $\D\in\EE$ and $Y_{0}$ resp. $\sigma$ is the locus resp. the tail of $\D$. The map $q$ being proper and birational, it follows that $X(\EE) = q(\tilde{X})$.
Let $O(\tau)\subseteq X(\tail(\EE))$ be the $\TT$-orbit corresponding to $\tau$.
Note that the set 
$$Z_{1} = (\P^{1}\setminus\supp(\EE))\times \bigcup_{\tau\in\bfan(\EE)}O(\tau)$$
identifies with a subset of $\tilde{X}$. We denote by $Z_{2}$ its complement. Moreover,
we observe by using \cite[Theorem 3.1]{AH06} that $q(x_{1})\neq q(x_{2})$ for all $x_{1}\in Z_{1}$ and $x_{2}\in Z_{2}$, and therefore we have the decomposition
$$E(X(\EE); u, v) = E(q(Z_{1}); u, v) + E(q(Z_{2}); u, v),$$
where the second part corresponds to $\sum_{[y,F]\in \Phi}E(O(y,F); u, v).$
Hence by substituting $u = v = -t$, we conclude.
\end{proof}

\section{Poincar\'e polynomials in the singular case}
\subsection{A description of the decomposition theorem}
In this section, we investigate the decomposition theorem
for equivariant proper maps between contraction-free $\TT$-varieties of complexity one induced by subdivisions of divisorial fans
(see Proposition \ref{decomp}). Let us introduce some notation.
\subsection{}
\label{tau}
Let $\EE$ be a divisorial fan on $(Y,N)$ such that $X(\EE)$ is contraction-free. Recall that $$\pi:X(\EE)\rightarrow Y$$ stands for the quotient map. We denote by $\germ$
the finite subset of $H(\EE)$ (see \ref{germ}) which consists of elements of the form:
\begin{itemize}
\item[(1)] $\tau\in\tail(\EE)$ (horizontal type)
\item[(2)] pair $(y, F)$ (vertical type)
where $F$ is a face of $\D_{y}$ for some $\D\in\EE$ and some $y\in\supp(\EE)$. 
\end{itemize}
The set of elements of type (1) resp. (2) will be denoted by $\germh$ resp. $\germv$. 

Note that the prime $\TT$-cycle $V(\tau)$ associated with
$\tau\in\germh$ is the subvariety obtained as the Zariski closure of $(Y\setminus\supp(\EE))\times \tilde{V}(\tau)$ in $X(\EE)$,
where $\tilde{V}(\tau)$ is the orbit closure of $X(\tail(\EE))$ corresponding to $\tau$ and $(Y\setminus \supp(\EE))\times X(\tail(\EE))$
is identified with the complement of $\pi^{-1}(\supp(\EE))_{\rm red}$ in $X(\EE)$. 
To every $\tau\in\germh$ one can attach a divisorial fan $\EE(\tau)$ describing the prime $\TT$-cycle $V(\tau)$.

Moreover, every prime $\TT$-cycle $V(\tau)$ with $\tau\in\germv$ is contained
in $\pi^{-1}(\supp(\EE))_{\rm red}$. If $X(\EE)$ is projective, then $V(\tau)$
corresponds to a projective toric variety; we denote by $Q_{\tau}$ a polytope such that the fan defining
$V(\tau)$ is the normal fan of $Q_{\tau}$. The notation $\EE(\tau)$ resp. 
$Q_{\tau}$ will apply in Section \ref{calculation}.
\begin{remarque}
The prime $\TT$-cycles contained in each fiber $\pi^{-1}(y)$ where $y\not\in\supp(\EE)$ correspond
to elements of $H(\EE)\setminus SH(\EE)$.  
\end{remarque}
The next paragraph introduces notions of subdivision of divisorial fans and $s$-sequences
similar to the toric case.
\begin{definition}
\label{subf}
With the same notation as in \ref{tau} for the divisorial fan $\EE$, we say that a divisorial fan $\EE'$ on $(Y, N)$
is a \emph{subdivision} of $\EE$ if for every $y\in Y$ it verifies:
\begin{itemize}
\item[(i)] For all $\D'\in\EE'$ there exists $\D\in \EE$ such that $\hc(\D')_{y}\subseteq\hc(\D)_{y}$.
\item[(ii)] For all $\D\in\EE$ there exist $\D^{1},\ldots, \D^{m}\in\EE'$ with the same loci as $\D$ 
such that $\hc(\D)_{y} = \bigcup_{i\in\{1,\ldots, m\}}\hc(\D^{i})_{y}$. 
\end{itemize}
Clearly a subdivision $\EE'$ of $\EE$ naturally induces an equivariant birational morphism $$X(\EE')\rightarrow X(\EE)$$ which is proper (see \cite[Theorem 12.13]{Tim11}). 
Note that $X(\EE')$ is also contraction-free and that $(\EE')^{+}_{y}$ is a subdivision fan of $\EE^{+}_{y}$
for every $y\in Y$. Similarly, $\tail(\EE')$ is a subdivision of $\tail(\EE)$. 
An \emph{$s$-sequence} of the subdivision $\EE'$ of $\EE$ is a sequence $s_{\tau, b}$ of natural numbers where $\tau\in\germ$ and $b\in\ZZ$
such that:
\begin{itemize}
\item[(a)] The sequence $s_{b,\tau}$ (for $\tau\in\germh = \tail(\EE)$) is a usual $s$-sequence (see Definition \ref{sequence}) of the fan subdivision $\tail(\EE')$ of 
$\tail(\EE)$.
\item[(b)] For every $y\in\supp(\EE)$, the sequence $s_{b,\tau}$ (for $\tau\in\EE^{+}_{y}$) is a usual $s$-sequence of the fan subdivision $(\EE')^{+}_{y}$ of $\EE^{+}_{y}$. Here we identify $\tau$ with its Cayley cone.
\end{itemize} 
\end{definition}
The following is the main result of this section. To prove this result, we observe that a contraction-free $\TT$-variety of complexity one is covered by explicit \'etale open subsets of toric varieties. This idea was developed in \cite[Chapter II]{KKMS73} for toroidal embeddings. Thus, we will reduce the proof to the statement \ref{decomptoric}.   
\begin{proposition}
\label{decomp}

Let $\EE$ be a divisorial fan on $(Y,N)$ corresponding to a contraction-free $\TT$-variety $X(\EE)$
and let $\EE'$ be a subdivision of the divisorial fan $\EE$. 
Consider the birational proper equivariant morphism $$f:X(\EE')\rightarrow X(\EE)$$ given by the subdivision $\EE'$. Then we have an isomorphism
$$f_{\star}\,IC_{X(\EE')}\simeq \bigoplus_{\tau\in\germ}\bigoplus_{b\in\ZZ}(i_{\tau})_{\star}IC_{V(\tau)}^{\oplus s_{\tau, b}}[-b]$$
in the derived category $D^b(X(\EE); \QQ)$, where $s_{\tau, b}$ is an $s$-sequence of the subdivision $\EE'$ and $i_{\tau}:V(\tau)\rightarrow X(\EE)$ is the inclusion.
This implies that for every Euclidean open subset $U\subseteq X(\EE)$ and every $j\in\ZZ$,
we have an isomorphism of $\QQ$-vector spaces 
$$IH^{j}(f^{-1}(U);\QQ)\simeq \bigoplus_{\tau\in\germ}\bigoplus_{b\in\ZZ}IH^{j-l_{\tau, b}}(U\cap V(\tau); \QQ)^{\oplus s_{\tau, b}},$$
where $l_{\tau, b} = b+\dim(X(\EE))-\dim(V(\tau))$. 
\end{proposition}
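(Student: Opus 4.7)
The plan is to apply the general Beilinson--Bernstein--Deligne--Gabber decomposition theorem to $f$ and then identify the supports, local systems, and multiplicities of the summands by reducing to the toric decomposition theorem (Theorem \ref{decomptoric}), both on the generic part of $Y$ and locally near each point of $\supp(\EE)$. Since $f$ is proper and $\TT$-equivariant and $IC_{X(\EE')}$ is $\TT$-equivariant, every support $\bar Z_{\alpha}$ appearing in the abstract decomposition must be a $\TT$-stable irreducible closed subvariety of $X(\EE)$, hence equals some $V(\tau)$ with $\tau \in H(\EE)$. The task is to show that only $\tau \in \germ$ contributes, that the local systems $\L_{\alpha}$ are trivial, and that the multiplicities form an $s$-sequence of the subdivision in the sense of Definition \ref{subf}.

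Over the Zariski open set $U_{0} = Y \setminus \supp(\EE)$ we have the product decompositions $\pi^{-1}(U_{0}) \simeq U_{0} \times X(\tail(\EE))$ and analogously for $\EE'$, so the restriction of $f$ is $\mathrm{id}_{U_{0}} \times f_{0}$ where $f_{0} : X(\tail(\EE')) \to X(\tail(\EE))$ is the toric birational morphism induced by the subdivision $\tail(\EE')$ of $\tail(\EE)$. The K\"unneth formula for intersection complexes together with Theorem \ref{decomptoric} applied to $f_{0}$ produces exactly the horizontal summands, indexed by $\tau \in \germh = \tail(\EE)$, with trivial local systems and multiplicities $s_{\tau,b}$ satisfying condition (a) of Definition \ref{subf}.

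For each $y \in \supp(\EE)$, the remaining summands, which are supported in $\pi^{-1}(y)_{\mathrm{red}}$, are identified via a local \'etale toric model. Following Example \ref{example1}, consider the auxiliary divisorial fan on $(\P^{1}, N)$ whose polyhedral coefficients at $0$ coincide with those of $\EE$ at $y$ and are trivial elsewhere; the corresponding $\TT$-variety $T_{y}$ is toric for $\big$ with defining fan $\EE^{+}_{y}$, and the same recipe applied to $\EE'$ yields a toric refinement $T'_{y}$ with defining fan $(\EE')^{+}_{y}$. Because $Y$ is a smooth curve, a local uniformizer at $y$ defines an \'etale morphism from a Zariski open neighborhood of $y$ in $Y$ to $\P^{1}$ sending $y$ to $0$; this extends to a $\TT$-equivariant \'etale morphism between $\TT$-stable Zariski open neighborhoods of $\pi^{-1}(y)_{\mathrm{red}}$ in $X(\EE)$ (respectively $X(\EE')$) and of the central fiber of $T_{y}$ (respectively $T'_{y}$), compatibly with $f$ and the toric map $T'_{y} \to T_{y}$. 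By Lemma \ref{pullback}, intersection complexes pull back to intersection complexes under these \'etale maps, so Theorem \ref{decomptoric} applied to $T'_{y} \to T_{y}$ yields the vertical summands supported over $y$, indexed by the faces of $\EE^{+}_{y}$ not contained in $N_{\QQ} \times \{0\}$, i.e., by the elements of $\germv$ lying over $y$, with trivial local systems and multiplicities satisfying condition (b) of Definition \ref{subf}.

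The main obstacle is this last step: constructing the local \'etale toric model rigorously and verifying that the $s$-sequences produced by the toric decompositions at distinct points $y \in \supp(\EE)$, together with the sequence from the generic part, consistently assemble into a single $s$-sequence on $\germ$ as required by Definition \ref{subf}. Once these pieces are combined, summing the contributions over $U_{0}$ and over the finitely many $y \in \supp(\EE)$ produces the stated sheaf decomposition, and the isomorphism on $IH^{j}$ of Euclidean open sets $U \subseteq X(\EE)$ follows by taking hypercohomology.
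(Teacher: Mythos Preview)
Your proposal is correct and follows essentially the same strategy as the paper: reduce to the toric decomposition theorem (Theorem \ref{decomptoric}) via local \'etale toric models, using Lemma \ref{pullback} to transport intersection complexes. The ingredients you list---the \'etale chart to $\AA^{1}$ (or $\P^{1}$) given by a local uniformizer, the Cartesian diagram with \'etale verticals, and base change---are exactly those in the paper's proof.

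There are two organizational differences worth noting. First, you split the argument into a generic piece (handled by K\"unneth over $Y\setminus\supp(\EE)$) and a special piece over each $y\in\supp(\EE)$; the paper instead treats \emph{every} point $z\in Y$ uniformly via the \'etale toric chart, which avoids invoking K\"unneth and makes the consistency of the horizontal $s$-coefficients automatic (they appear in every local chart as the toric $s$-sequence of the tail subdivision). Second, you begin with the abstract BBD decomposition and then identify the summands, whereas the paper computes $Rf_{\star}IC$ locally on each chart and then glues. The ``main obstacle'' you flag---assembling the local data into a global decomposition with trivial local systems---is resolved in the paper by covering $X(\EE)$ by the charts $X(\D_{|U_{z_i}})$, observing that the local decompositions all have the form dictated by a single $s$-sequence of the divisorial fan subdivision, and invoking uniqueness of the decomposition into simple perverse sheaves together with the fact that a local system trivial on a Zariski dense open is trivial (\cite[Remark 2.5.3]{Dim04}). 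This is precisely the missing step in your outline, and once supplied your argument is complete.
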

\begin{proof}
Let $\D\in\EE$. Denote by $Y_{0}$ the locus of $\D$ and fix a point $z\in Y_{0}$. 
Since $Y_{0}$ is smooth,
there exist two Zariski open neighborhoods
$$z\in U_{z}\subseteq Y_{0}\text{ and } 0\in V_{0}\subseteq \AA^{1}$$ 
and a surjective \'etale morphism $\varphi: U_{z}\rightarrow V_{0}$ sending $z$ to $0$ with the condition that $\varphi^{-1}(0) = \{z\}$.
Let $\D^{z}$ be the polyhedral divisor on $\AA^{1}$ defined by the conditions $\D_{0}^{z} = \D_{z}$
and $\D^{z}_{y} = \sigma$ for any $y\neq 0$, where $\sigma$ is the tail of $\D$.

Shrinking $U_{z}$ and $V_{0}$ if necessary, we may suppose that $\D^{z}_{y} = \sigma$ for any $y\in V_{0}\setminus \{0\}$
and $\D_{y'} = \sigma$ for any $y'\in U_{z}\setminus\{z\}$. Let us denote 
$$\D_{|U_{z}} = \sum_{y\in U_{z}}\D_{y}\cdot [y]\text{  and  }\D^{z}_{|V_{0}} = \sum_{y\in V_{0}}\D_{y}^{z}\cdot [y].$$ 
Then the map $\varphi$ induces an \'etale morphism $X(\D_{|U_{z}})\rightarrow X(\D^{z}_{|V_{0}})$. Indeed,
the preceding map may view as the natural projection
$$X(\D_{|U_{z}})\simeq X(\D^{z}_{|V_{0}})\times_{V_{0}}U_{z}\rightarrow X(\D^{z}_{|V_{0}}).$$
We also remark that $X(\D^{z}_{|V_{0}})$ is a Zariski open subset of the affine toric $\TT\times\G_{m}$-variety $X(\hc(\D)_{z})$ associated with the cone $\hc(\D)_{z}$. 

Let $\EE'(\D_{|U_{z}})$ resp. $\EE'(\D^{z}_{|V_{0}})$ be the divisorial fan of polyhedral divisors $\D'_{|U_{z}}$ resp. $(\D')^{z}_{|V_{0}}$, where 
$\D'$ runs $\EE'$ such that $\D$ and $\D'$ have the same locus and
$\hc(\D')_{y}\subseteq \hc(\D)_{y}$ for any $y$ belonging to this common locus. Then $\varphi$ also induces a commutative diagram:
$$\xymatrix{
    X(\EE'(\D_{|U_{z}})) \ar[d]^j \ar[r]^f & X(\D_{|U_{z}})  \ar[d]^i\\
    X(\EE'(\D^{z}_{|V_{0}})) \ar[r]^{f'}  & X(\D^{z}_{|V_{0}})  
 }$$
The horizontal maps $f$ and $f'$ are given by subdivisions of divisorial fans and the vertical maps $i$ and $j$ are \'etale morphisms. 
Furthermore, we have the equalities
$$f^{-1}(X(\D_{|U_{z}})) = X(\EE'(\D_{|U_{z}}))\text{ and } (f')^{-1}(X(\D^{z}_{|V_{0}})) = X(\EE'(\D^{z}_{|V_{0}})).$$
The map $f'$ is the restriction of a toric map given by a fan subdivision. By Theorem \ref{decomptoric} we have 
$$f'_{\star}\,IC_{X(\EE'(\D^{z}_{|V_{0}}))}\simeq  \bigoplus_{\tau\in SH(\D^{z}_{|V_{0}})}\bigoplus_{b\in\ZZ}(i_{\tau})_{\star}IC_{V'(\tau)}^{\oplus s_{\tau, b}}[-b],$$
where $\tau \mapsto V'(\tau)$ is the parametrization of the prime $\TT$-cycles in $X(\D^{z}_{|V_{0}})$. Since $i,j$ are smooth of relative dimension $0$, we have $i^{\star} = i^{!}$ and $j^{\star} = j^{!}$ (see \cite[Section 4.2.4]{BBP82}). By construction of the maps $i,j$ the diagram above is Cartesian. Hence
by base change and the equalities $i^{-1}(V'(\tau)) = V(\tau)\cap X(\D_{|U_{z}})$ it follows that 
$$f_{\star} IC_{X(\EE'(\D_{|U_{z}}))} \simeq f_{\star} j^{\star} IC_{X(\EE'(\D^{z}_{|V_{0}}))}\simeq i^{\star} f'_{\star}\,IC_{X(\EE'(\D^{z}_{|V_{0}}))}$$ 
$$\simeq  \bigoplus_{\tau\in SH(\D^{z}_{|V_{0}})}\bigoplus_{b\in\ZZ}i^{\star}(i_{\tau})_{\star}IC_{V'(\tau)}^{\oplus s_{\tau, b}}[-b].$$
Note that the first isomorphism is given by Lemma \ref{pullback}. 
Using the Cartesian diagram
$$\xymatrix{
    V(\tau)\cap X(\D_{|U_{z}}) \ar[d]^i \ar[r]^{i_{\tau}} & X(\D_{|U_{z}})  \ar[d]^i\\
    V'(\tau) \ar[r]^{i_{\tau}}  & X(\D^{z}_{|V_{0}})  
 }$$
and Lemma \ref{pullback} we have by base change the isomorphisms
$$(f_{\star}\,IC_{X(\EE')})_{|X(\D_{|U_{z}})}\simeq  \bigoplus_{\tau\in\germ}\bigoplus_{b\in\ZZ}(i_{\tau})_{\star}IC_{V(\tau)}^{\oplus s_{\tau, b}}[-b]_{|X(\D_{|U_{z}})},$$
where $s_{\tau, b}$ is an $s$-sequence of the divisorial fan subdivision $\EE'$.
Since $X(\EE)$ is contraction-free we can cover $X(\EE)$ by open subsets 
$$X(\D^{1}_{|U_{z_{1}}}),\ldots, X(\D^{r}_{|U_{z_{r}}}),
\text{ where }\D^{i}\in \EE, z_{i}\in Y$$ and the open dense subset $U_{z_{i}}\subseteq Y$ sastifying $U_{z_{i}}\cap\supp(\D) \subseteq \{z_{i}\}.$
By the argument above we may choose these open subsets so that we have isomorphisms
\begin{eqnarray}
(f_{\star}\,IC_{X(\EE')})_{|X(\D_{|U_{z_{i}}})}\simeq  \bigoplus_{\tau\in\germ}\bigoplus_{b\in\ZZ}(i_{\tau})_{\star}IC_{V(\tau)}^{\oplus s_{\tau, b}}[-b]_{|X(\D_{|U_{z_{i}}})}\text{ for all }
i = 1,\ldots r.
\end{eqnarray}
Using the uniqueness of the decomposition into simple perverse sheaves, \cite[Remark 2.5.3]{Dim04} and (1), we conclude that 
$$f_{\star}\,IC_{X(\EE')}\simeq \bigoplus_{\tau\in\germ}\bigoplus_{b\in\ZZ}(i_{\tau})_{\star}IC_{V(\tau)}^{\oplus s_{\tau, b}}[-b],$$
where $s_{\tau, b}$ is an $s$-sequence of the divisorial fan subdivision $\EE'$. 
Note that the local systems in the right-hand side of the previous relation are trivial since by (1) 
they are trivial on a Zariski open dense subset (see \cite[Remark 2.5.3]{Dim04}).
This finishes the proof of the proposition.   
\end{proof}

\subsection{Calculation of the intersection cohomology}
The aim of this section is to extend the statement of Proposition \ref{proposition} to the singular case.
Let us start with some preliminary results.
\label{calculation}
\begin{lemma}
\label{subdivision}
Let $X(\EE)$ be a projective contraction-free $\TT$-variety of complexity one
corresponding to a divisorial fan $\EE$ on $(Y, N)$. Then there exists a subdivision $\EE'$ of the divisorial
fan $\EE$ such that for any $y\in Y$ each cone of $\EE'_{y}$ is generated by a subset of a basis of $N_{\QQ}\oplus\QQ$ and the natural morphism $X(\EE')\rightarrow X(\EE)$
is projective.
\end{lemma}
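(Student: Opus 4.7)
The plan is to build $\EE'$ by producing smooth projective refinements of $\tail(\EE)$ and of each local fan $\EE_{y}^{+}$ (for the finitely many $y\in\supp(\EE)$) in a compatible way, and then packaging them into a divisorial fan on $(Y,N)$. The local combinatorics $\EE_{y}$ at a general $y\notin\supp(\EE)$ is entirely controlled by $\tail(\EE)$ together with the auxiliary cones $\hc^{-}(\D)$, so only these finitely many special points plus the tail fan require genuine refinement.

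First, I would pick a smooth projective refinement $\Sigma$ of $\tail(\EE)\subset N_{\QQ}$; this is standard toric desingularization (see \cite{CLS11}). Observe that for every $\D\in\EE$ with tail $\sigma$ one has $\hc^{-}(\D) = \sigma\times\QQ_{\leq 0}$, so any refinement of $\sigma$ by cones $\tau$ of $\Sigma$ automatically induces the refinement $\{\tau\times\QQ_{\leq 0}\}$ of $\hc^{-}(\D)$, with each cone generated by part of a basis of $(N\oplus\ZZ)_{\QQ}$. Next, for each $y\in\supp(\EE)$, I would extend $\Sigma$ to a smooth projective refinement $\Sigma_{y}^{+}$ of $\EE_{y}^{+}\subset (N\oplus\ZZ)_{\QQ}$ whose horizontal restriction to $\{l=0\}\simeq N_{\QQ}$ equals $\Sigma$; concretely, first replace the horizontal faces of cones of $\EE_{y}^{+}$ by cones of $\Sigma$, then apply toric desingularization while keeping $\Sigma$ untouched in the horizontal hyperplane.

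Second, I would translate the refined data into a divisorial fan $\EE'$: each maximal cone of some $\Sigma_{y}^{+}$ meeting the slice $\{l=1\}$ determines a polyhedron at $y$ in $N_{\QQ}$ (its intersection with $\{l=1\}$) whose recession is a cone of $\Sigma$, and by prescribing such polyhedra at each $y\in\supp(\EE)$ and taking pure recession coefficients at all other points of $Y$, one obtains finitely many proper polyhedral divisors on $Y$ forming $\EE'$. The properness and intersection axioms of \ref{divisorial} are inherited because each refinement respects the ambient hypercone structure, and Definition \ref{subf} is then immediate. For the projectivity of $X(\EE')\to X(\EE)$, I would arrange each of the refinements $\Sigma$ and $\Sigma_{y}^{+}$ to be projective via a strictly convex support function, and combine these local data into a relatively ample $\TT$-invariant $\QQ$-Cartier divisor on $X(\EE')$ using the combinatorial description of $\TT$-invariant divisors on $\TT$-varieties of complexity one.

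The main obstacle is the compatibility required in the second step: the refinements $\Sigma_{y}^{+}$ at distinct points $y\in\supp(\EE)$ must all restrict to the same tail refinement $\Sigma$, and one must verify that the resulting finite collection of polyhedra at the various $y$ can indeed be packaged into a single finite collection of proper polyhedral divisors realising $\EE'_{y}=\Sigma_{y}^{+}$ pointwise. Once this coherence is arranged, the rest is a direct translation of toric desingularization and the toric projectivity criterion into the language of divisorial fans.
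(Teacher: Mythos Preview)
Your approach is correct in outline but takes a genuinely different and more elaborate route than the paper. The paper simply performs star subdivisions: first along every ray of $\tail(\EE)$ (yielding an intermediate subdivision $\EE''$ of $\EE$), then along every ray of $(\EE'')_{y}^{+}\setminus\tail(\EE'')$ for each $y\in\supp(\EE'')$. Star subdivisions automatically produce simplicial fans and induce projective toric morphisms by the proof of \cite[Proposition~11.1.7]{CLS11}, and projectivity of $X(\EE')$ then follows from \cite[Corollary~3.28]{PS11}. The key advantage is that a star subdivision along a ray makes sense directly at the level of the divisorial fan, so the compatibility problem you flag as the ``main obstacle'' never arises: one is never forced to reassemble separately refined local fans into a single divisorial fan.

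Your construction instead refines each local toric fan $\EE_{y}^{+}$ separately and only afterwards packages the results into a divisorial fan on $(Y,N)$. This is valid, and your identification of the coherence issue (all $\Sigma_{y}^{+}$ must share the same horizontal restriction $\Sigma$, and the slice polyhedra must assemble into finitely many proper polyhedral divisors satisfying the axioms of \ref{divisorial}) is accurate; it can be carried out, but it is precisely the bookkeeping the paper's star-subdivision argument sidesteps. Note also that the lemma only asks for simpliciality (generators forming part of a basis of the $\QQ$-vector space $(N\oplus\ZZ)_{\QQ}$), not smoothness over the lattice; your choice of smooth refinements is harmless but overshoots the target and makes the projectivity argument slightly heavier than necessary.
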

\begin{proof}
First of all, we consider a star subdivision (see \cite[Section 11.1]{CLS11}) for every one-dimensional cone of $\tail(\EE)$. This induces
a subdivision $\EE''$ of $\EE$. Secondly, to each $y\in\supp(\EE'')$ we consider a star subdivision of $(\EE'')^{+}_{y}$
for every one-dimensional cone of $(\EE'')^{+}_{y}\setminus\tail(\EE'')$. This last process induces again a subdivision $\EE'$ of $\EE$ (see \cite[Section 5.3]{LPR16} for more details). We conclude by \cite[Corollary 3.28]{PS11} and the proof
of \cite[Proposition 11.1.7]{CLS11} that we may find the subdivision $\EE'$ as required.
\end{proof}
As in Section \ref{betti}, the next result establishes relations between $h$-polynomials of polytopes starting with
Example \ref{example1}.
\begin{lemma}
\label{formula1}
Let $\EE,\EE'$ be divisorial fans on $(\P^{1}, N)$ as in Example \ref{example1}. Assume that $\EE'_{0}$
is a subdivision of $\EE_{0}$ such that $X(\EE')$ is projective and each cone of $\EE'_{0}$ is generated by a 
subset of a basis of $N_{\QQ}\oplus\QQ$. Let $d = \dim(X(\EE))$ and $d_{\tau} = \dim(V(\tau))$
for $\tau\in H(\EE)$. Then we have the formula
$$h_{j+d}(Q_{0}(\EE)) = h_{j+d}(Q_{0}(\EE')) - \sum_{\tau\in \germh,\,\tau\neq 0}\sum_{b\in\ZZ}s_{\tau, b}\cdot h_{j+d_{\tau}-b}(Q_{0}(\EE(\tau)))$$
$$-\sum_{\tau\in \germv,\,\tau\in\EE^{+}_{0}}\sum_{b\in\ZZ}s_{\tau, b}\cdot h_{j+d_{\tau}-b}(Q_{\tau})$$
for every $j\in\ZZ_{\geq 0}$, 
where $s_{\tau, b}$ is any $s$-sequence of the subdivision $\EE'$ (see the notation in \ref{tau}).
\end{lemma}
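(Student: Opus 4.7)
The approach is to apply the decomposition theorem (Proposition \ref{decomp}) to the birational proper equivariant morphism $f:X(\EE')\to X(\EE)$ induced by the subdivision, and translate the resulting Poincar\'e-polynomial identity into a relation between $h$-polynomials via Theorem \ref{hvector}. The coefficient of $t^{2(j+d)}$ on both sides then gives the desired formula.

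First I would record that, in the setting of Example \ref{example1}, both $X(\EE)$ and $X(\EE')$ are toric varieties for $\TT\times\G_{m}$ with defining fans $\EE_{0}$ and $\EE'_{0}$. The projectivity of $X(\EE')$ ensures that $\EE'_{0}$ is the normal fan of $Q_{0}(\EE')$; since subdivision of a polytopal fan is again polytopal, $X(\EE)$ is projective with $\EE_{0}$ the normal fan of $Q_{0}(\EE)$. Theorem \ref{hvector} then provides $P_{X(\EE)}(t)=h(Q_{0}(\EE);t^{2})$ and $P_{X(\EE')}(t)=h(Q_{0}(\EE');t^{2})$.

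Next, taking $U=X(\EE)$ in Proposition \ref{decomp} and summing the resulting isomorphisms over $j$, I would deduce
\[
P_{X(\EE')}(t)=\sum_{\tau\in\germ}\sum_{b\in\ZZ}s_{\tau,b}\,t^{l_{\tau,b}}\,P_{V(\tau)}(t),
\]
where $l_{\tau,b}=b+\dim(X(\EE))-\dim(V(\tau))$. The term $\tau=0\in\germh$ contributes exactly $P_{X(\EE)}(t)$, since $V(0)=X(\EE)$ and Definition \ref{subf}(a) combined with Theorem \ref{decomptoric}(iv) forces $s_{0,0}=1$ and $s_{0,b}=0$ for $b\neq 0$. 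For $\tau\in\germh$ with $\tau\neq 0$, the construction in \ref{tau} identifies $V(\tau)$ with $X(\EE(\tau))$, which is again a projective toric variety of Example \ref{example1} type (now on the quotient lattice), so Theorem \ref{hvector} gives $P_{V(\tau)}(t)=h(Q_{0}(\EE(\tau));t^{2})$. For $\tau\in\germv$ with $\tau\in\EE^{+}_{0}$, \ref{tau} exhibits $V(\tau)$ as a projective toric variety with defining polytope $Q_{\tau}$, so $P_{V(\tau)}(t)=h(Q_{\tau};t^{2})$.

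Finally, I would substitute these $h$-polynomial expressions into the decomposition identity, solve for $h(Q_{0}(\EE);t^{2})$, and extract the coefficient of $t^{2(j+d)}$ on both sides. The parity-vanishing $s_{\tau,b}=0$ when $l_{\tau,b}$ is odd (Theorem \ref{decomptoric}(iii), via Definition \ref{subf}) ensures that only even shifts $t^{l_{\tau,b}}$ contribute, and the equality $\dim(V(\tau))=d-\dim(\tau)=d_{\tau}$ then converts each such shift into an index shift of the corresponding $h$-polynomial coefficient. I expect the main difficulty to be bookkeeping: one must carefully verify that the shifts in the decomposition theorem reparameterize to produce exactly the indices $j+d_{\tau}-b$ appearing on the right-hand side of the stated formula, and that the identification $V(\tau)=X(\EE(\tau))$ for horizontal $\tau\neq 0$ genuinely preserves the Example \ref{example1} structure required to invoke Theorem \ref{hvector}.
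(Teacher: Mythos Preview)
Your approach is essentially the same as the paper's: apply the decomposition theorem to the map $X(\EE')\to X(\EE)$, isolate the $\tau=0$ summand, and convert Betti numbers into $h$-vector entries via Theorem \ref{hvector}. The only difference is that the paper invokes Theorem \ref{decomptoric} directly (since in the situation of Example \ref{example1} both $X(\EE)$ and $X(\EE')$ are already toric for $\TT\times\G_{m}$), whereas you route through Proposition \ref{decomp}; the resulting $s$-sequences and summands coincide by construction, so this is cosmetic. One small correction: your sentence ``since subdivision of a polytopal fan is again polytopal'' is false in general and in any case argues in the wrong direction ($\EE'_{0}$ refines $\EE_{0}$, not conversely); projectivity of $X(\EE)$ is simply implicit in the statement through the use of $Q_{0}(\EE)$ (cf.\ \ref{polytopes}), so you can drop that justification.
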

\begin{proof}
Applying Theorem \ref{decomptoric} for the (toric) proper map $X(\EE')\rightarrow X(\EE)$ induced by the subdivision $\EE'_{0}$ of $\EE_{0}$
we obtain
$$b_{j+d}(X(\EE')) = \sum_{\tau\in \germ}\sum_{b\in\ZZ}s_{\tau, b}\cdot b_{j+d_{\tau}-b}(X(\EE(\tau)))$$
for every $j\in\ZZ$. We conclude by using Theorem \ref{hvector} and the fact that $s_{0,0} = 1$ and $s_{0,b} = 0$
for all $b\in\ZZ\setminus\{0\}$. 
\end{proof}
The reader may remark that the formula of the next result depends only of the tail fans $\tail(\EE)$ and $\tail(\EE')$. Furthermore, the proof of this result is a straightforward consequence of Theorem \ref{decomptoric}
and so we omitted it.
\begin{lemma}
\label{formula2}
Let $\EE$ be a divisorial fan on $(Y,N)$ such that $X(\EE)$ is contraction-free and projective.
Let $\EE'$ be a subdivision of $\EE$ as in Lemma \ref{subdivision}.
We recall that $Q(\EE)$ denotes a polytope such that $\tail(\EE)$
is a normal fan of $Q(\EE)$. Then we have the equality
$$h_{j+d}(Q(\EE)) = h_{j+d}(Q(\EE')) - \sum_{\tau\in\germh,\,\tau\neq 0}\sum_{b\in\ZZ} s_{\tau,b}\cdot h_{j+d_{\tau}-b}(Q(\EE(\tau)))$$
for every $j\in\ZZ$, where $s_{\tau, b}$ is any $s$-sequence of the subdivision $\EE'$.
\end{lemma}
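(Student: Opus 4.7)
The plan is to deduce this identity directly from the purely toric decomposition theorem (Theorem \ref{decomptoric}) applied to the proper birational toric morphism $f_{0}\colon X(\tail(\EE'))\to X(\tail(\EE))$ induced by the tail fan subdivision. The crucial observation, built into Definition \ref{subf}(a), is that the horizontal part of the $s$-sequence of the divisorial subdivision $\EE'$ of $\EE$ is, by definition, the ordinary toric $s$-sequence of the induced tail subdivision $\tail(\EE')$ of $\tail(\EE)$. Since $X(\EE)$ is projective and contraction-free, both $X(\tail(\EE))$ and $X(\tail(\EE'))$ are projective toric varieties of dimension $d-1 = \rank N$.

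Concretely, I would apply Theorem \ref{decomptoric} to $f_{0}$, take global hypercohomology on $X(\tail(\EE))$, and translate the resulting isomorphism of intersection cohomology groups into an equality of Poincar\'e polynomials of the form
\[ P_{X(\tail(\EE'))}(t) = \sum_{\tau\in\tail(\EE)} \sum_{b\in\ZZ} s_{\tau,b}\, t^{l_{\tau,b}}\, P_{\tilde{V}(\tau)}(t), \]
where $\tilde{V}(\tau)$ is the orbit closure of $\tau$ in $X(\tail(\EE))$ and $l_{\tau,b} = b + \dim X(\tail(\EE')) - \dim\tilde{V}(\tau)$.

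Next, I would convert each Poincar\'e polynomial into the $h$-polynomial of the corresponding polytope via Theorem \ref{hvector}. The key identification is that $\tail(\EE(\tau))$ coincides with the star fan of $\tau$ in $\tail(\EE)$, so $\tilde{V}(\tau)$ is precisely the projective toric variety $X(\tail(\EE(\tau)))$, and hence $P_{\tilde{V}(\tau)}(t) = h(Q(\EE(\tau));t^{2})$. Isolating the $\tau=0$ summand, which equals $h(Q(\EE);t^{2})$ because $s_{0,0}=1$ and $s_{0,b}=0$ for $b\neq 0$ by Theorem \ref{decomptoric}(iv) and because $Q(\EE(0))=Q(\EE)$, then rearranging and extracting the coefficient of $t^{2(j+d)}$, would produce the stated equality.

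The only real difficulty is the careful dimension bookkeeping needed to match the shifts: one must align the germ dimension $d_{\tau}=\dim V(\tau)$ in $X(\EE)$, the dimension $d_{\tau}-1$ of the toric orbit closure $\tilde{V}(\tau)$, and the cohomological shift $l_{\tau,b}$ coming out of Theorem \ref{decomptoric}. Beyond this bookkeeping, no additional geometric input is needed, which explains why the authors present the result as a straightforward consequence of Theorem \ref{decomptoric}.
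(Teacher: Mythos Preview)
Your proposal is correct and follows exactly the approach the paper indicates: the authors explicitly state that the result is ``a straightforward consequence of Theorem \ref{decomptoric}'' and omit the proof, and your plan---apply the toric decomposition theorem to the tail subdivision $\tail(\EE')\to\tail(\EE)$, invoke Definition \ref{subf}(a) to identify the horizontal part of the $s$-sequence with the toric one, convert Betti numbers to $h$-vectors via Theorem \ref{hvector} using the identification of $\tilde V(\tau)$ with $X(\tail(\EE(\tau)))$, and isolate the $\tau=0$ term via property (iv)---is precisely what is meant. Your remark that the only subtlety is the shift bookkeeping $\dim\tilde V(\tau)=d_\tau-1$ versus $\dim X(\tail(\EE))=d-1$ (so that $l_{\tau,b}=b+d-d_\tau$ agrees with the index shift in the stated formula) is also on target.
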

The following is our main result.
\begin{theorem}
\label{main}
Let $X(\EE)$ be a singular projective contraction-free $\TT$-variety of complexity one
corresponding to a divisorial fan $\EE$ on $(Y, N)$.
Denote by $g$ the genus of the curve $Y$ and by $r$ the cardinal of the set $\supp(\EE)$.
Then we have the equality
$$P_{X(\EE)}(t) =  ((1-r)t^{2} + 2gt + 1- r)h(Q(\EE); t^{2}) + \sum_{y\in \supp(\EE)}h(Q_{y}(\EE); t^{2}).$$
In particular, if $X(\EE)$ is rational, then $IH^{2j+1}(X(\EE); \QQ) = 0$ for every $j\in\ZZ$.
\end{theorem}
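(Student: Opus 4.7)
The plan is to argue by induction on $d = \dim X(\EE)$, with trivial base case $d=0$. For the inductive step, first invoke Lemma \ref{subdivision} to choose a subdivision $\EE'$ of $\EE$ such that every cone of $\EE'_{y}$ is generated by a subset of a basis of $(N\oplus\ZZ)_{\QQ}$ for all $y\in Y$ and the induced equivariant birational morphism $f:X(\EE')\rightarrow X(\EE)$ is projective. By \cite[Section 5]{LS13}, smoothness of each $X(\EE'_{y})$ implies that $X(\EE')$ itself is smooth projective, so Proposition \ref{proposition} supplies the desired closed form for $P_{X(\EE')}(t)$.

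Next, apply Proposition \ref{decomp} to $f$ with Euclidean open set $U = X(\EE)$. Since $X(\EE')$ is smooth, intersection cohomology coincides with ordinary cohomology on the left, and isolating the term $\tau=0$ (for which $V(0)=X(\EE)$ and $s_{0,0}=1$, $s_{0,b}=0$ otherwise) yields the Poincar\'e polynomial identity
$$P_{X(\EE)}(t) \;=\; P_{X(\EE')}(t) \;-\; \sum_{\substack{\tau\in\germ\\ \tau\neq 0}}\;\sum_{b\in\ZZ} s_{\tau, b}\,t^{\,l_{\tau, b}}\,P_{V(\tau)}(t),$$
with $l_{\tau, b} = b + d - \dim V(\tau)$. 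Split the sum by type. For horizontal $\tau\in\germh$, $\tau\neq 0$, the germ $V(\tau)$ is itself a projective contraction-free complexity-one variety (for a quotient torus) described by $\EE(\tau)$ over the same curve $Y$, of dimension strictly less than $d$, so the inductive hypothesis supplies $P_{V(\tau)}(t)$ in the claimed shape. For vertical $\tau\in\germv$, the germ $V(\tau)$ is a projective toric variety and Theorem \ref{hvector} gives $P_{V(\tau)}(t) = h(Q_{\tau};t^{2})$.

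Substituting these expressions into the identity above and collecting terms, the $h(Q(\EE');t^{2})$ piece of $P_{X(\EE')}(t)$ together with the $h(Q(\EE(\tau));t^{2})$ contributions from horizontal germs assembles, via Lemma \ref{formula2}, into $((1-r)t^{2}+2gt+1-r)\,h(Q(\EE);t^{2})$. Similarly, for each $y\in\supp(\EE)$, the $h(Q_{y}(\EE');t^{2})$ piece together with the $h(Q_{y}(\EE(\tau));t^{2})$ contributions from horizontal germs and the $h(Q_{\tau};t^{2})$ contributions from vertical germs $\tau\in\EE^{+}_{y}$ assembles, via Lemma \ref{formula1} applied to the subdivision $(\EE')^{+}_{y}$ of $\EE^{+}_{y}$, into $h(Q_{y}(\EE);t^{2})$. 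The main technical obstacle is this bookkeeping: one must verify that the single $s$-sequence of Proposition \ref{decomp} is simultaneously compatible with the tail-fan requirement of Lemma \ref{formula2} and with the fiber requirement at each $y\in\supp(\EE)$ of Lemma \ref{formula1}, which is exactly guaranteed by parts (a) and (b) of Definition \ref{subf}.

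Finally, for the rationality assertion, the existence of the dominant morphism $\pi:X(\EE)\rightarrow Y$ forces $Y=\P^{1}$ and hence $g=0$. The main formula then expresses $P_{X(\EE)}(t)$ as a $\ZZ$-linear combination of polynomials in $t^{2}$, so $\IH^{2j+1}(X(\EE);\QQ)=0$ for every $j\in\ZZ$.
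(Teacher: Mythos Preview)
Your proposal follows essentially the same strategy as the paper's proof: induction on $\dim X(\EE)$, Lemma~\ref{subdivision} to produce a simplicial subdivision $\EE'$, Proposition~\ref{proposition} for $P_{X(\EE')}(t)$, Proposition~\ref{decomp} to relate $P_{X(\EE)}$ to $P_{X(\EE')}$ and the $P_{V(\tau)}$, induction on the horizontal germs and Theorem~\ref{hvector} on the vertical ones, and finally Lemmas~\ref{formula1} and~\ref{formula2} to collapse the bookkeeping.

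Two small inaccuracies worth fixing. First, the condition ``each cone of $\EE'_{y}$ is generated by a subset of a basis of $(N\oplus\ZZ)_{\QQ}$'' is simpliciality, not smoothness; the toric varieties $X(\EE'_{y})$ and hence $X(\EE')$ need only have quotient singularities, not be smooth. This does not matter, since Proposition~\ref{proposition} is stated precisely under that simplicial hypothesis (its second paragraph handles the non-smooth case via rational smoothness), but your invocation of \cite[Section 5]{LS13} to conclude smoothness is not justified. Second, the natural base case is $d=1$ (where $X(\EE)=Y$ and $P_{Y}(t)=t^{2}+2gt+1$), not $d=0$, since a complexity-one $\TT$-variety has dimension at least one.
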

\begin{proof}
We show the result by induction on the dimension $d$ of $X(\EE)$. In the initial step $d = 1$ we have $X(\EE) = Y$. So 
$P_{X(\EE)}(t) =  t^{2} + 2gt + 1$ and the result holds in this step.   
Assume that the result holds in dimension $<d$ where $d\geq 2$. 
Let us consider a subdivision $\EE'$ of $\EE$ as in Lemma \ref{subdivision}.
Using Proposition \ref{decomp}, we
can write for every $j\in\ZZ$:
$$b_{j+d}(X(\EE)) = b_{j+d}(X(\EE')) - \sum_{\tau\in\germ,\,\tau\neq 0}\sum_{b\in\ZZ}s_{\tau, b}\cdot b_{j+d_{\tau}-b}(X(\EE(\tau)))$$
$$ = b_{j+d}(X(\EE')) - \sum_{\tau\in\germh,\,\tau\neq 0}\sum_{b\in\ZZ}s_{\tau, b}\cdot b_{j+d_{\tau}-b}(X(\EE(\tau)))$$
$$-\sum_{y\in\supp(\EE)}\sum_{\tau\in\germv,\,\tau\in\EE^{+}_{y}}\sum_{b\in\ZZ}s_{\tau, b}\cdot b_{i+d_{\tau}-b}(X(\EE(\tau))).$$ 
We apply the induction process for the prime $\TT$-cycles $V(\tau) = X(\EE(\tau))\subsetneq X(\EE)$ so that
$$b_{j+d}(X(\EE)) = \sum_{y\in\supp(\EE)}h_{j+d}(Q_{y}(\EE'))$$
$$+(1-r)\, h_{j+d-2}(Q(\EE'))+2g\, h_{j+d-1}(Q(\EE'))+(1-r)\, h_{j+d}(Q(\EE'))$$
$$-\sum_{\tau\in\germh,\,\tau\neq 0}\sum_{b\in\ZZ}s_{\tau, b}\cdot [\sum_{y\in\supp(\EE)}h_{j+d_{\tau}-b}(Q_{y}(\EE(\tau)))$$
$$+(1-r)\, h_{j+d_{\tau}-b-2}(Q(\EE(\tau)))+2g\, h_{j+d_{\tau}-b-1}(Q(\EE(\tau)))+(1-r)\, h_{j+d_{\tau}-b}(Q(\EE(\tau)))]$$
$$-\sum_{y\in\supp(\EE)}\sum_{\tau\in\germv,\,\tau\in\EE^{+}_{y}}\sum_{b\in\ZZ}s_{\tau, b}\cdot h_{j + d_{\tau}-b}(Q_{\tau}).$$
We observe that since $\supp(\EE(\tau))\subseteq \supp(\EE)$ for any $\tau\in\germ$ (see \ref{tau}) we may substitute the sets $\supp(\EE(\tau))$
by the set $\supp(\EE)$ in the second and third sums of the right-hand side of the preceding equality.  
Using Lemmas \ref{formula1} and \ref{formula2} we have
$$b_{j+d}(X(\EE)) = (1-r)\cdot[h_{j+d-2}(Q(\EE'))-\sum_{\tau\in\germh,\,\tau\neq 0}\sum_{b\in\ZZ}s_{\tau,b}\cdot h_{j+d_{\tau}-b-2}(Q(\EE(\tau)))]$$
$$+2g\cdot[h_{j+d-1}(Q(\EE'))-\sum_{\tau\in\germh,\,\tau\neq 0}\sum_{b\in\ZZ}s_{\tau,b}\cdot h_{j+d_{\tau}-b-1}(Q(\EE(\tau)))]$$
$$+(1-r)\cdot[h_{j+d}(Q(\EE'))-\sum_{\tau\in\germh,\,\tau\neq 0}\sum_{b\in\ZZ}s_{\tau,b}\cdot h_{j+d_{\tau}-b}(Q(\EE(\tau)))]$$
$$+\sum_{y\in\supp(\EE)}[h_{j+d}(Q_{y}(\EE'))-\sum_{\tau\in\germ,\,\tau\neq 0}\sum_{b\in\ZZ}s_{\tau, b}\cdot h_{j+d_{\tau}-b}(Q_{y}(\EE(\tau)))$$
$$-\sum_{\tau\in\germv,\,\tau\in\EE^{+}_{y}}\sum_{b\in\ZZ}s_{\tau, b}\cdot h_{j+d_{\tau}-b}(Q_{\tau})]$$
$$= (1-r)h_{j+d-2}(Q(\EE)) + 2g h_{j+d-1}(Q(\EE)) +(1-r) h_{j+d}(Q(\EE)) + \sum_{y\in\supp(\EE)}h_{j+d}(Q_{y}(\EE)).$$
This gives the formula for the Poincar\'e polynomial $P_{X(\EE)}(t)$. Finally, for the last claim we apply the formula for $g = 0$.   
\end{proof}

\end{document}